\newtheorem{Theorem}{Theorem}[section]
\newtheorem{Proposition}[Theorem]{Proposition}
\newtheorem{Lemma}[Theorem]{Lemma}
\theoremstyle{definition}
\title{A Property of Upper Level Sets of Lelong Numbers of Currents on $\mathbb{P}^2$}
\author{James J. Heffers}
\subjclass[2010]{Primary 32U25; Secondary 32U05, 32U40}
\keywords{Positive closed currents, Lelong numbers, Plurisubharmonic functions}
\begin{document}

\maketitle

\author

\begin{abstract}
\noindent Let $T$ be a positive closed current of bidimension $(1,1)$ with unit mass on the complex projective space $\mathbb P^2$. For $\alpha > 2/5$ and $\beta = (2-2\alpha)/3$ we show that if $T$ has four point with Lelong number at least $\alpha$, the upper level set $E_{\beta}^+ (T)$ of points of $T$ with Lelong number strictly larger than $\beta$ is contained within a conic with the exception of at most one point.
\end{abstract}

\section{Introduction}

Let $T$ be a positive closed current of bidimension $(1,1)$ in $\mathbb{P}^2$ with unit mass, where
$$\|T\| := \int_{\mathbb{P}^2} T \wedge \omega =1$$

 \noindent and $\omega$ is the Fubini-Study form on $\mathbb{P}^2$.  We consider the following upper level sets of Lelong numbers $\nu( T, q)$ of the current $T$

\begin{center}

$E_{\alpha} (T) = \{q \in \mathbb{P}^2 \, | \, \nu(T,q) \geq \alpha \},$

$E_{\alpha}^+ (T) = \{q \in \mathbb{P}^2 \, | \, \nu(T,q) > \alpha \}.$

\end{center}

It has been shown by Siu \cite{Siu74} that $E_{\alpha} (T)$ is an analytic subvariety of dimension at most $1$ when $\alpha >0$.  We will continue the investigation of the geometric properties of these sets started by Coman in \cite{C06} and pursued in a more general setting by Coman, Guedj, and Truong in \cite{CG09} and \cite{CT15}.  It has been shown by Coman \cite[Theorem 1.1]{C06} that given a current $T$ as above and $\alpha \geq \frac{1}{2}$ then we can find a complex line $L$ such that all points $p$ satisfying the condition $\nu(T, p) >\alpha$ are contained in a complex line $L$ (with at most the exception of one point).  Simply put, there exists $L$ such that $|E_{\alpha}^+ (T) \backslash L| \leq 1$ (this result holds in general in $\mathbb{P}^n$ as well).  Coman then showed in  \cite[Theorem 1.2]{C06} an analogous theorem for conics, that given a current $T$ as above and $\alpha \geq \frac{2}{5}$, then there is a conic $C$ such that $|E_{\alpha}^+ (T)\backslash C|\leq 1$.  He then proceed to establish in \cite[Theorem  3.10]{C06} that given two points $q_1$ and $q_2$ with Lelong number $\nu(T,q_i) \geq \alpha > \frac{1}{2}$, and setting $\beta = (2-\alpha) /3 $, then there exists $L$ such that $|E_{\beta}^+ (T) \backslash L| \leq 1$, showing that  when we have the existence of two points with large Lelong numbers, we can find a line containing a larger upper level set.

 Our goal is to establish a result analogous to Coman's above mentioned result \cite[Theorem 3.10]{C06} for conics, i.e. to find $\beta$ in terms of $\alpha$ such that given a few points in $E_\alpha (T)$, we can find a conic that either contains $E_{\beta}^+(T)$ or at most omits one point of $E_{\beta}^+ (T)$.  Coman showed that we needed two points of ``large" Lelong number in his result, and that it fails if we have less than two such points.  Since two points uniquely define a complex line, one may suspect initially that we would need five points in general position with ``large" Lelong number to make an analogous result for conics, as five points in general position define a unique conic.  However it turns out that we only need four such points, and that the four points can be in any position.  Specifically, we want to prove the following:

\begin{Theorem}\label{T:mt1} Let $T$ be a positive closed current of bidimension $(1,1)$ in $\mathbb{P}^2$, $\alpha > 2/5$ and $\beta = \frac{2}{3} (1- \alpha)$.  Let $\{ q_{i} \}_{i=1}^{4}$ be points in $\mathbb{P} ^{2}$ such that $\nu (T, q_{i}) \geq \alpha$.  Then there exists a conic $C$ (possibly reducible) such that $| E_{\beta }^+ (T) \backslash C | \leq 1$.
\end{Theorem}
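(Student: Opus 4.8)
The plan is to combine the existing conic result of Coman \cite[Theorem 1.2]{C06} with a careful case analysis driven by the four distinguished points $q_1,\dots,q_4$. By that theorem, since $\alpha>2/5$ we already obtain a conic $C_0$ with $|E_\alpha^+(T)\setminus C_0|\le 1$, and in particular at least three of the $q_i$ lie on $C_0$ (here I would work with $E_\alpha$ rather than $E_\alpha^+$, noting $\nu(T,q_i)\ge\alpha$; a standard limiting/semicontinuity argument lets one pass from the strict to the non-strict set at the cost of keeping the same bound, or one replaces $\alpha$ by $\alpha-\epsilon$ and lets $\epsilon\to 0$). The goal is then to show this same $C_0$, or a modification of it, also controls the much larger set $E_\beta^+(T)$ with $\beta=\tfrac23(1-\alpha)<\alpha$.

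The core mechanism is the intersection inequality for positive closed currents: if $D$ is an irreducible curve of degree $d$ which is not contained in $\operatorname{supp}T$, then $\sum_{p\in D}\nu(T,p)\,\nu([D],p)\le d\cdot\|T\|=d$, where $\nu([D],p)$ is the multiplicity of $D$ at $p$; and if $D$ is a component of $T$ with generic Lelong number $\gamma$ along it, one splits $T=\gamma[D]+T'$ with $\|T'\|=1-\gamma d$ and applies the estimate to $T'$. The strategy is to suppose, for contradiction, that $E_\beta^+(T)$ is \emph{not} contained in any conic union one point — so there exist six or more "bad" points (with Lelong number $>\beta$) in sufficiently general position — and then build an auxiliary low-degree curve through enough of the $q_i$ and the bad points to violate the mass inequality. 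Concretely: through the four points $q_i$ (Lelong number $\ge\alpha$) plus an appropriate subset of the putative extra bad points one can always pass a conic (five points lie on a conic, and with multiplicity/reducibility one gets more), a cubic, or a line; summing Lelong numbers against this curve and comparing with its degree forces $\beta$ to be at least $\tfrac23(1-\alpha)$, the exact break-even value, contradicting strictness. The constant $\tfrac23(1-\alpha)$ should emerge precisely as the solution of such a degree-versus-mass balance when a conic carries mass $2\alpha$-worth of the $q_i$'s.

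I would organize the argument by cases on how the four points $q_i$ sit relative to $C_0$ and to each other — whether $C_0$ is irreducible or splits into two lines, whether it is a component of $T$ or not, and how many $q_i$ are collinear. The degenerate configurations (three or four of the $q_i$ collinear, so that a line already carries mass $\ge 3\alpha$ or $4\alpha$) are handled by invoking Coman's line results \cite[Theorem 1.1, Theorem 3.10]{C06} directly or by an easy direct estimate, since $4\alpha>1$ forces that line into $\operatorname{supp}T$ and then the residual current $T'$ has small mass $1-4\alpha$ (when the line has degree $1$ we need $4\alpha\le 1$, i.e. this sub-case only occurs for $\alpha\le 1/4$, which is excluded, so in fact four collinear $q_i$ is impossible and three collinear already gives strong control). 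The truly generic case — $C_0$ irreducible, containing (at least) three or all four $q_i$, not a component of $T$ — is where the main inequality $\sum\nu(T,p)\nu([C_0],p)\le 2$ must be leveraged against the hypothetical extra bad points lying off $C_0$.

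The main obstacle I expect is the bookkeeping when $C_0$ is reducible, $C_0=L_1\cup L_2$, and the $q_i$ and the bad points distribute themselves awkwardly between the two lines, possibly with one or both lines being components of $\operatorname{supp}T$ (each such component then forces $\gamma_j\le 1-$(mass of $q_i$'s on the other line), and one must track the residual masses on each piece simultaneously). A second delicate point is ensuring that "at most one exceptional point" is genuinely sharp and not "at most two": one must show that if two bad points lie off the controlling conic, one can always absorb one of them by enlarging or reselecting the conic (e.g. replacing one line of a reducible $C_0$ by the line through one of the stray bad points and one of the $q_i$), and verify the mass inequality still closes. Getting every sub-case to yield the same threshold $\beta=\tfrac23(1-\alpha)$, with the strict inequality in $E_\beta^+$ providing the needed slack to convert "$\le$" into "$<$" and hence a contradiction, is the heart of the proof.
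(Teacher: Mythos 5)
Your outline correctly identifies the general toolbox (Siu decomposition, intersection inequalities against low-degree curves, case analysis on collinearity), but it is missing the two mechanisms that actually drive the paper's proof, and it contains a false claim. First, the false claim: you assert that four collinear $q_i$ is impossible because $4\alpha>1$ would violate the degree bound for a line. That bound $\sum_{p\in L}\nu(T,p)\le 1$ only holds when $L$ is \emph{not} a component of $T$; when $L$ carries generic Lelong number $a>0$ one only gets $\sum_p(\nu(T,p)-a)\le 1-a$, which permits arbitrarily many collinear points of Lelong number close to $1$. The paper devotes Proposition \ref{P:33} precisely to the four-collinear case, where one shows $a>\frac{4\alpha'-1}{3}$ and then renormalizes the residual current $R/(1-a)$ so that the off-line $\beta$-points acquire Lelong number $>1/2$ and Coman's line theorem applies. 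Your claim that the residual mass is $1-4\alpha$ conflates the sum of point Lelong numbers with the generic Lelong number along the line.

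Second, and more importantly, you give no mechanism for producing the candidate conic through a \emph{mixed} set of $\alpha$-points and $\beta$-points. Applying Coman's Theorem \ref{T:22} at level $\alpha$ controls $E_\alpha^+(T)$ only; the points of $E_\beta^+(T)$ with $\beta<2/5$ are invisible to it, and "the same $C_0$ or a modification of it" is exactly the gap. The paper's device is to form the auxiliary current $R=\frac{5\alpha'-2}{15\alpha'}\sum[L_{jk}]+\frac{2}{5\alpha'}T$, a convex combination of $T$ with integration currents over lines through the $\beta$-points, which lifts their Lelong numbers above $2/5$ while keeping $\|R\|=1$; only then does Theorem \ref{T:22} yield a conic through six of the seven relevant points. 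Moreover, your plan to "pass a conic through the four $q_i$ plus an appropriate subset of bad points" cannot work as stated: a conic through five prescribed points is unique, so you cannot force additional bad points onto it, and in the genuinely general-position configurations (no line through four of the eight points, no conic through seven) there is no low-degree curve available at all. The paper handles exactly those configurations with entire pluricomplex Green functions ($\gamma_u=3$ with weight-one poles at eight points, giving $3\ge 4\alpha+4\beta>3$), a tool absent from your proposal and not replaceable by a naive "cubic through eight points" because such a cubic may contain components of $\operatorname{supp}T$ with positive generic Lelong number, where the degree inequality fails. Without these two ingredients the contradiction cannot be closed in the main cases.
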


 After proving this, we will look at several examples to establish that each assumption is necessary.  Example 3.7 shows that we have situations where for any conic $C$, $|E_{\beta}^+(T)\backslash C| = 1$.  We then show in example 3.8 that our value of $\beta$ is sharp for this property.  Finally examples 3.9 and 3.10 show that if we only have three points of ``large" Lelong number (either in general position or collinear in each example respectively) then the conclusion fails to hold.

\smallskip

\noindent \textbf{Acknowledgments.}  The author would like to thank Professor Dan Coman for his support, insights, and suggestions.  Additionally the author thanks the referee for their comments and suggestions. 

\section{Preliminaries}

 In an attempt to try keep this paper self contained, we will review the tools pivotal to proving the result.  The following two theorems by Coman were mentioned in the previous section, but stated below for convenience.

\begin{Theorem}\cite[Theorem 1.1]{C06} \label{T:21} Let $T$ be a positive closed current of bidimension $(1,1)$ in $\mathbb{P}^n$.  If $\alpha \geq \frac{1}{2}$ then there exists a line $L$ such that $|E_{\alpha}^+ (T) \backslash L| \leq 1$.  Moreover, if $\alpha \geq 2/3$ then $E_{\alpha}^+ (T) \subset L$.
\end{Theorem}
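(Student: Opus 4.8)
The plan is to derive the theorem from two standard mass--Lelong estimates obtained via the Siu decomposition of $T$ along lines. Write $\nu(T,L)=\min_{x\in L}\nu(T,x)$ for the generic Lelong number of $T$ along a line $L$. Siu's decomposition gives $T=\nu(T,L)\,[L]+R$ with $R\ge 0$ closed, $\|R\|=1-\nu(T,L)$, and $\nu(R,x)=\nu(T,x)-\nu(T,L)$ for $x\in L$. Restricting $R$ to $L$ --- in $\mathbb{P}^2$ through the intersection $R\wedge[L]$, whose mass is the cohomological product $\|R\|\cdot\deg L=1-\nu(T,L)$, and in $\mathbb{P}^n$ through a generic linear projection onto $L\cong\mathbb{P}^1$ --- together with $\nu(R,x)\le(R\wedge[L])(\{x\})$ yields the collinear bound
\begin{equation}\label{E:cb}
\sum_{i=1}^{m}\nu(T,p_i)\le 1+(m-1)\,\nu(T,L),\qquad p_1,\dots,p_m\in L\ \text{distinct}.
\end{equation}
I would pair this with the off-line bound $\nu(T,q)=\nu(R,q)\le\|R\|=1-\nu(T,L)$ for $q\notin L$, and with the fact that distinct lines may be split off simultaneously, so that $\nu(T,L)+\nu(T,L')\le 1$ whenever $L\ne L'$.

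Next I would reduce to a finite configuration. If some line $L_0$ satisfies $\nu(T,L_0)>\alpha$, the off-line bound forces every point off $L_0$ to have Lelong number $\le 1-\nu(T,L_0)<\tfrac12\le\alpha$, so $E_\alpha^+(T)\subset L_0$ and both conclusions hold. Otherwise no curve can lie in $E_\alpha^+(T)$: a curve of degree $d\ge 2$ carrying Lelong number $>\tfrac12$ would have mass $>1$, and lines are excluded by the previous sentence. Since $E_\alpha(T)$ is analytic of dimension at most $1$ by Siu, it follows that $E_\alpha^+(T)$ is finite.

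The two assertions now follow by combining the estimates. For the first, I argue by contradiction, assuming every line omits at least two points of the finite set $E_\alpha^+(T)$. A short combinatorial argument then produces four of these points lying two-on-two on two distinct lines $L_{12}\ne L_{34}$ and forming a genuine quadrilateral, i.e. $p_1,p_2\notin L_{34}$ and $p_3,p_4\notin L_{12}$. Writing $T=\nu(T,L_{12})[L_{12}]+\nu(T,L_{34})[L_{34}]+S$ with $S\ge0$ closed and $\|S\|=1-\nu(T,L_{12})-\nu(T,L_{34})$, and restricting $S$ to each line exactly as for \eqref{E:cb}, I obtain $\nu(T,p_1)+\nu(T,p_2)\le 1+\nu(T,L_{12})-\nu(T,L_{34})$ and the symmetric inequality for $p_3,p_4$; adding them gives $\sum_{i=1}^{4}\nu(T,p_i)\le 2$. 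As each Lelong number exceeds $\alpha\ge\tfrac12$, the left-hand side exceeds $2$, a contradiction, so some line omits at most one point. For the ``moreover'' part, the same scheme applied to a non-collinear triple $p_1,p_2,p_3$ --- using \eqref{E:cb} on $L_{12}$ and $\nu(T,L_{12})\le 1-\nu(T,p_3)$ --- gives $\nu(T,p_1)+\nu(T,p_2)+\nu(T,p_3)\le 2$, which for $\alpha\ge\tfrac23$ is incompatible with all three Lelong numbers exceeding $\alpha$; hence every triple is collinear and $E_\alpha^+(T)\subset L$.

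The step needing the most care --- the main obstacle --- is the combinatorial argument extracting the quadrilateral, and in particular the degenerate configurations where three of the candidate points are collinear, so that the clean ``two disjoint edges'' inequality is unavailable. There I would choose the line $A$ carrying the most points of $E_\alpha^+(T)$, pick two points off it spanning a line $B$, and select two points of $A$ lying off $B$ (possible because $A\cap B$ is a single point); a brief check of the borderline case in which $A$ carries only two points, so that $E_\alpha^+(T)$ is in general position, completes the extraction. A secondary technical point is making the restriction to a single line rigorous in $\mathbb{P}^n$ for $n\ge 3$, where $R\wedge[L]$ is not defined for bidegree reasons; here I would push $R$ forward under a generic projection onto $L\cong\mathbb{P}^1$ and use that the Lelong numbers at points of $L$ do not exceed the masses of the resulting atoms.
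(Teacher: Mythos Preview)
The paper does not prove this statement at all: Theorem~\ref{T:21} is quoted verbatim from \cite{C06} in the Preliminaries section and used as a black box, so there is no ``paper's own proof'' to compare against.

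That said, your argument is essentially correct and self-contained. The two key inequalities---the collinear bound \eqref{E:cb} from slicing $R=T-\nu(T,L)[L]$ by $[L]$, and the off-line bound $\nu(T,q)\le 1-\nu(T,L)$---are exactly the kind of mass estimates the present paper derives in Lemma~\ref{L:31} via Siu decomposition, regularization (Proposition~\ref{P:25}), and the Demailly comparison inequality. Your quadrilateral contradiction for $\alpha\ge\tfrac12$ and the three-point bound $\sum\nu(T,p_i)\le 2$ for $\alpha\ge\tfrac23$ both go through cleanly. One small technical remark: to make $R\wedge[L]$ rigorous in $\mathbb{P}^2$ you should either note that after stripping off the $[L]$-component the local potential of $R$ is not identically $-\infty$ on $L$ (so the slice is well defined as $dd^c$ of a psh restriction), or---as the paper does throughout---pass to an $R'$ with analytic singularities via Proposition~\ref{P:25} before intersecting. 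Your combinatorial extraction of the quadrilateral is fine once you observe that the contradiction hypothesis forces $|E_\alpha^+(T)|\ge 4$; the case split on whether the richest line carries two or at least three points handles the degenerate configurations.

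As for the relation to Coman's original proof in \cite{C06}: that paper builds entire pluricomplex Green functions with prescribed logarithmic poles (cf.\ Proposition~\ref{P:21} here) and bounds $\sum\nu(T,p_i)$ by $\gamma_u\|T\|$. Your route via direct slicing is more elementary and avoids constructing any auxiliary psh functions, at the cost of needing the small combinatorial lemma to locate a usable quadrilateral.
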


\begin{Theorem}\cite[Theorem 1.2]{C06}\label{T:22} Let $T$ be a positive closed current of bidimension $(1,1)$ in $\mathbb{P}^2$.  If $\alpha \geq \frac{2}{5}$ then there exists a conic $C$ (possibly reducible) such that $|E_{\alpha}^+ (T) \backslash C| \leq 1$.
\end{Theorem}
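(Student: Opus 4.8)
The plan is to realize $C$ as a union of at most two lines, each peeled off $T$ as a component, together with a single application of Theorem~\ref{T:21} or Theorem~\ref{T:22} to a residual current of controlled mass; the argument is organized as a case analysis on how many of the points $q_i$ are collinear.

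First I would record the two basic mechanisms. If $S$ is a positive closed current of bidimension $(1,1)$ and $V\subset\mathbb P^2$ is an algebraic curve of degree $d$ no component of which occurs in the Siu decomposition of $S$, then $S\wedge[V]$ is a positive measure of total mass $d\|S\|$ and Demailly's comparison theorem gives $(S\wedge[V])(\{p\})\ge\nu(S,p)\,\nu([V],p)$, so summing over smooth points, $\sum_{p\in V}\nu(S,p)\le d\|S\|$. Consequently: (a) if $k$ points on an irreducible curve $V$ of degree $d$ have Lelong numbers summing to more than $d\|S\|$, then $V$ occurs in $S$, and applying the inequality to $S-c[V]$ bounds its multiplicity $c$ from below --- in particular three of the $q_i$ on a common line $L$ force $L$ into $T$ with $c\ge(3\alpha-1)/2$, four collinear $q_i$ force $c\ge(4\alpha-1)/3$, while two points of Lelong number $\ge\alpha$ force their joining line only when $\alpha>\tfrac12$; and (b) $\nu(T,p)\ge c$ for every point $p$ on a line occurring with multiplicity $c$, so a line with $c>\beta$ lies in $E_\beta^+(T)$ and must be a component of $C$, whereas a residual current of mass $<\beta$ has empty $E_\beta^+$. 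Finally I would note the pencil observation: when no three $q_i$ are collinear, the conics through $q_1,q_2,q_3,q_4$ form a pencil whose generic member is irreducible, smooth at the $q_i$, and not a component of $T$; since $4\alpha+\beta=\tfrac13(10\alpha+2)>2$ for $\alpha>2/5$, any member of this pencil through a further point $p$ with $\nu(T,p)>\beta$ would give $2\ge\|T\wedge[C']\|\ge 4\alpha+\nu(T,p)>2$ unless it is reducible or occurs in $T$.

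The clean case is that all four $q_i$ lie on a line $L$: writing $T=c[L]+R$ with $c$ maximal gives $c\ge(4\alpha-1)/3$, hence $\|R\|=1-c\le\tfrac43(1-\alpha)=2\beta$ and $\beta/\|R\|\ge\tfrac12$; as $\nu(T,\cdot)=\nu(R,\cdot)$ off $L$, Theorem~\ref{T:21} applied to $R/\|R\|$ produces a line $L'$ off which at most one point of $E_\beta^+(T)\setminus L$ lies, and we take $C=L\cup L'$. The same computation closes any later situation in which, having extracted one or two component lines destined to be the components of $C$, the residual mass $m$ satisfies $m\le 2\beta$ (use Theorem~\ref{T:21}, or its stronger $m\le 1-\alpha$ conclusion), or $m<\beta$ (the residual contributes nothing to $E_\beta^+$); the constraint is that Coman's theorems may be invoked only when the line or conic they produce can itself be a component of $C$, since otherwise one is forced onto a cubic. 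When exactly three $q_i$ lie on $L$ (with $q_4\notin L$), $L$ is still forced; if its multiplicity already reaches $(4\alpha-1)/3$ we finish as above, and otherwise we extract a second component line through $q_4$ to form $C=L\cup L'$, using that in $R=T-c[L]$ the point $q_4$ retains $\nu(R,q_4)\ge\alpha$ and $q_1,q_2,q_3$ retain $\nu(R,q_i)=\nu(T,q_i)-c>\tfrac13(1-\alpha)$. When no three $q_i$ are collinear the pencil observation shows the configuration cannot even occur once $\alpha>\tfrac12$: if some $L_{ij}$ occurs in $T$ then so does the opposite line $L_{kl}$ (as $\nu(T,q_k)+\nu(T,q_l)\ge2\alpha>1$ beats the residual mass), and the inequalities $c_{ij}\ge c_{kl}+2\alpha-1$, $c_{kl}\ge c_{ij}+2\alpha-1$ add to $\alpha\le\tfrac12$, a contradiction; so no $L_{ij}$ occurs, and intersecting $T$ with a generic irreducible member of the pencil gives $2\ge4\alpha>2$.

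The main obstacle is the intermediate range where $\alpha$ is not large enough for these residual-mass estimates to close immediately --- in particular the portion of the ``three collinear'' and ``no three collinear'' cases with $\alpha$ near or below $\tfrac12$, where two points of Lelong number $\ge\alpha$ no longer force their joining line, so one cannot simply peel a line and quote Coman, and a careless appeal to Theorem~\ref{T:21} or~\ref{T:22} on the residual current yields a cubic rather than a conic. There one must show, by a simultaneous bookkeeping of the extracted multiplicities --- using the bounds $(3\alpha-1)/2$ and $(4\alpha-1)/3$, the surviving Lelong numbers $>\tfrac13(1-\alpha)$ of the points on an extracted line, and the degree-one and degree-two mass inequalities applied to the reducible members $L_{ij}\cup L_{kl}$ of the pencil and to conics through four candidate points of $E_\beta^+(T)$ --- that after all forced components are removed the remaining points of $E_\beta^+(T)$ all lie on a single line, which becomes the last component of $C$. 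The delicate feature is that these estimates must balance on the nose: $\beta=\tfrac23(1-\alpha)$ is exactly the value at which the bookkeeping closes, so the argument here is essentially tight, which is also what the later sharpness examples are designed to reflect.
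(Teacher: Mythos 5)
You have sketched a proof of the wrong statement. Theorem 2.2 is Coman's result, quoted by the paper from [C06] \emph{without proof}: for any positive closed current $T$ of unit mass on $\mathbb P^2$ and any $\alpha\ge 2/5$, the set $E_\alpha^+(T)$ itself lies on a conic with at most one exception. Its hypothesis contains no distinguished points $q_i$ and no second threshold $\beta$; yet your entire argument is organized around four points $q_i$ with $\nu(T,q_i)\ge\alpha$ and the set $E_\beta^+(T)$ with $\beta=\frac23(1-\alpha)$. That is the setup of the paper's main Theorem 1.1 (proved via Lemmas 3.1--3.3 and Propositions 3.4--3.6), not of the statement at hand. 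Compounding this, your opening sentence proposes ``a single application of Theorem 2.1 or Theorem 2.2 to a residual current'' --- i.e.\ you invoke the very theorem you are supposed to prove, with no induction on mass or other device to break the circularity. Coman's actual proof of his Theorem 1.2 runs through the pluricomplex Green function machinery (the analogues of Propositions 2.3 and 2.4 here): assuming two points of $E_\alpha^+(T)$ escape every conic through the others, one constructs $u\in PSH(\mathbb C^2)$ of controlled logarithmic growth $\gamma_u$ with prescribed poles and contradicts the mass inequality $\sum_i\alpha_i\,\nu(T,p_i)\le\gamma_u\|T\|$; it is not a peel-off-lines argument.

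Even read charitably as an attempt at the main theorem, the proposal has a genuine gap exactly where you flag ``the main obstacle'': for $\alpha\in(2/5,\tfrac12]$ two points of Lelong number $\ge\alpha$ do not force their joining line into the Siu decomposition, your pencil computation $2\ge 4\alpha$ yields nothing, and the promised ``simultaneous bookkeeping'' is precisely the content of Lemmas 3.1--3.3 and the long case analyses of Propositions 3.4 and 3.5 --- which cannot be dispensed with and which require the Green-function input of Propositions 2.3 and 2.4, not merely Siu decomposition and the comparison theorem for Lelong numbers. Only your ``all four $q_i$ collinear'' paragraph matches an argument actually carried out in the paper (Proposition 3.6), and even there the paper finishes by deriving a contradiction with general position rather than by assembling $C=L\cup L'$ directly.
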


 We will also need to use entire pluricomplex Green functions in the upcoming  result.  Pluricomplex Green functions were introduced and studied in bounded domains in \cite{D87} , \cite{K85}, \cite{L89}, and \cite{L83}. Special cases were considered in \cite{C00} and \cite{C02}.  Let $S = \{p_1,  \dots ,p_k\} \subset \mathbb{C}^n$, and let $u\in PSH(\mathbb{C}^n)\cap L_{loc}^{\infty}(\mathbb{C}^n \backslash S)$ be such that $u = -\infty$ when restricted to $S$.  Define $\gamma_u$ as follows

\begin{center}

$$\gamma_u := \limsup_{\|z\| \rightarrow +\infty} \frac{u(z)}{\log{\|z\|}}  .$$
\end{center}

\noindent If $\gamma_u$ is finite, we say $u$ has logarithmic growth. If in addition $u$ satisfies the Monge-Amp\`ere equation $(dd^c u)^n = 0$ away from $S$, then $u$ is an entire pluricomplex Green function.  If for $p_i\in S$ we have 

\begin{center}
$u(z) - \alpha \log \|z - p_i \| = O(1) \; \text{as} \; z\rightarrow p_i $
\end{center}

\noindent then $u$ has a logarithmic pole of weight $\alpha$ at $p_i$.  Further, let $\widetilde{E} (S) \subset PSH(\mathbb{C} ^n)\cap L_{loc}^{\infty}(\mathbb{C}^n \backslash S)$ be the class of plurisubharmonic functions that have logarithmic poles of weight one at the points of $S$ and logarithmic growth.  With this information, we have the following two propositions by Coman that we will need:

\begin{Proposition}\cite[Proposition 2.1]{C06}\label{P:21}  Let $S = \{p_1, \dots , p_k\} \subset \mathbb{C}^n$ and let $T$ be a positive closed current of bidimension $(l,l)$ on $\mathbb{P}^n$.  If $u\in PSH(\mathbb{C}^n)$ has logarithmic growth, it is locally bounded outside a finite set, and $u(z) \leq \alpha_i \log\|z - p_i\| + O(1)$ for $z$ near $p_i$, where $\alpha_i > 0$, $1\leq i \leq k$, then

\begin{center}
\begin{displaymath}
\sum_{i=1}^{k} \alpha_{i}^{l} \nu(T, p_i) \leq \gamma_u^l \|T\| \; .
\end{displaymath}
\end{center}

\end{Proposition}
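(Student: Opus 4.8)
The plan is to recognize both sides of the inequality as, respectively, a lower and an upper bound for the total mass of a single positive measure: the Monge--Amp\`ere measure of $u$ wedged against $T$. First I would translate the pole hypotheses into Lelong-number hypotheses on $u$. Since $\log\|z - p_i\| \to -\infty$ as $z \to p_i$, dividing the assumption $u(z) \leq \alpha_i \log\|z-p_i\| + O(1)$ through by $\log\|z-p_i\|$ reverses the inequality, so that
$$\liminf_{z \to p_i} \frac{u(z)}{\log\|z-p_i\|} \;\geq\; \alpha_i,$$
which is exactly the statement that the Lelong number of $u$ satisfies $\nu(u,p_i) \geq \alpha_i$ for each $i$.

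Next I would form the current $\mu := (dd^c u)^l \wedge T$. Because $u$ is locally bounded outside a finite set and $T$ has bidimension $(l,l)$, the unbounded locus of $u$ is zero-dimensional, so the Bedford--Taylor--Demailly theory of intersection with currents having small unbounded locus guarantees that $\mu$ is a well-defined positive measure (a current of bidimension $(0,0)$). The heart of the argument is then a local lower bound at each marked point together with a global upper bound on the total mass. For the local bound I would invoke Demailly's comparison theorem for Lelong numbers, applied $l$ times, which yields
$$\mu(\{p_i\}) \;=\; \nu(\mu, p_i) \;\geq\; \nu(u,p_i)^{\,l}\,\nu(T,p_i) \;\geq\; \alpha_i^{\,l}\,\nu(T, p_i),$$
using the first step for the last inequality. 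Here I am using that the Lelong number of a positive measure at a point is simply its atomic mass there.

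For the global bound I would pass to $\mathbb{P}^n$. The logarithmic growth hypothesis means $u(z) - \gamma_u \log\|z\|$ is bounded above near infinity, so $dd^c u$ extends across the hyperplane at infinity to a positive closed $(1,1)$-current on $\mathbb{P}^n$ whose cohomology class is $\gamma_u\{\omega\}$, the same multiple of the hyperplane class detected by $\gamma_u \log\|z\|$. Since $H^{n-l,n-l}(\mathbb{P}^n)$ is generated by $\{\omega\}^{n-l}$ and $[T] = \|T\|\,\{\omega\}^{n-l}$ with $\{\omega\}^n = 1$, the cohomological intersection of the extended current with $T$ equals $\gamma_u^{\,l}\|T\|$. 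By positivity, the mass of $\mu$ carried by $\mathbb{C}^n$ is at most this total, so $\int_{\mathbb{C}^n} \mu \leq \gamma_u^{\,l}\|T\|$. Finally, since $\mu$ is a positive measure, its total mass dominates the sum of its atoms, and combining the two bounds gives
$$\sum_{i=1}^{k} \alpha_i^{\,l}\,\nu(T,p_i) \;\leq\; \sum_{i=1}^{k} \mu(\{p_i\}) \;\leq\; \int_{\mathbb{C}^n}\mu \;\leq\; \gamma_u^{\,l}\,\|T\|,$$
as desired.

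I expect the main obstacle to be the global mass computation in the third paragraph: rigorously extending $dd^c u$ and its powers across the hyperplane at infinity, verifying that the cohomology class is exactly $\gamma_u\{\omega\}$, and confirming that no mass is created at infinity that would invalidate the identification of the intersection number with $\gamma_u^{\,l}\|T\|$ -- all of which rely only on the one-sided growth estimate $\gamma_u = \limsup_{\|z\|\to\infty} u(z)/\log\|z\|$. A secondary technical point is ensuring the generalized comparison theorem applies in the present non-isolated-pole setting, i.e. that the iterated lower bound $\nu(\mu,p_i)\geq \nu(u,p_i)^l\nu(T,p_i)$ holds for the wedge product as constructed rather than only for a single factor.
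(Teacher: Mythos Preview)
The paper does not supply a proof of this proposition; it is quoted verbatim as a preliminary result from \cite[Proposition 2.1]{C06}, so there is no in-paper argument to compare against. Your outline is essentially the argument given in the original reference: form the positive measure $(dd^c u)^l\wedge T$ using the Bedford--Taylor--Demailly theory (valid because the unbounded locus of $u$ is finite), bound its atoms below by $\alpha_i^l\,\nu(T,p_i)$ via Demailly's comparison inequality, and bound its total mass above by extending $dd^c u$ across the hyperplane at infinity to a closed positive $(1,1)$-current in the class $\gamma_u\{\omega\}$ and reading off the cohomological intersection with $T$. The two concerns you flag are the right ones, and both are handled in the cited source exactly along the lines you sketch: the one-sided $\limsup$ growth bound suffices to extend $dd^c u$ with mass $\gamma_u$ (any extra mass would sit on the hyperplane at infinity and only helps the inequality), and the iterated comparison $\nu((dd^c u)^l\wedge T,p_i)\geq \nu(u,p_i)^l\,\nu(T,p_i)$ is precisely \cite[Corollary 5.10]{D93} applied in this setting.
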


We define $m_j(S) := \max \{ |S\cap C| : \text{$C$ an algebraic curve}, \deg C =j\}$, i.e. the maximum number of points of $S$ contained on a degree $j$ algebraic curve.

\begin{Proposition}\cite[Proposition 2.4.(i)]{C06}\label{P:22}  Let  $A\subset \mathbb{C}^2$ with $|A| = 7$, $m_1(A)\leq3$, $m_2(A) = 6$, and let $\Gamma$ be the conic such that $|A\cap \Gamma| = 6$.  Let $q\notin A\cup \Gamma $.  If $m_1(A\cup \{q\})\leq 3$, then there exists $u\in PSH(\mathbb{C}^2)$ with $\gamma_u = 3$ such that $u$ is locally bounded outside a finite set, and $u(z) \leq \log\|z - p\| + O(1)$ near each $p\in A\cup \{q\}$.
\end{Proposition}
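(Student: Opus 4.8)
The plan is to realize $u$ as the maximum of the logarithms of two cubic polynomials that vanish at all eight points of $A\cup\{q\}$. Concretely, I would search for polynomials $P_1,P_2$ of degree $3$ on $\mathbb{C}^2$, each vanishing at every point of $A\cup\{q\}$ and having no common factor, and then set
\[
u(z)=\log\max\bigl(|P_1(z)|,\,|P_2(z)|\bigr).
\]
This $u$ is plurisubharmonic, and since $P_1,P_2$ have no common factor, B\'ezout's theorem bounds their common zero set by $9$ points, so $u$ is locally bounded off that finite set. Because each $P_i$ vanishes at every $p\in A\cup\{q\}$, we have $|P_i(z)|\le C\|z-p\|$ near $p$, whence $u(z)\le\log\|z-p\|+O(1)$ there, giving exactly the required logarithmic poles of weight one. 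Thus the entire statement reduces to producing two cubics of genuine degree $3$, vanishing at the eight points, whose common zero locus is finite.

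For existence and for the growth $\gamma_u=3$, I would argue by a dimension count. The space of polynomials of degree at most $3$ on $\mathbb{C}^2$ has dimension $10$, and vanishing at the eight points imposes at most eight linear conditions, so the linear system $\mathcal L$ of cubics through $A\cup\{q\}$ has dimension at least $2$; in particular $\mathcal L$ is at least a pencil. Moreover, since $m_2(A)=6<7$ no conic passes through all seven points of $A$, hence none passes through the eight points of $A\cup\{q\}$; consequently every nonzero member of $\mathcal L$ has a nonvanishing degree-$3$ part, since otherwise it would be a conic through all eight points. This forces $|P_i(z)|\le C\|z\|^3$ everywhere and $|P_i(z)|\sim c\|z\|^3$ with $c\neq0$ along a generic ray, so $\gamma_u=3$ for any choice of $P_1,P_2$ drawn from $\mathcal L$.

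The main obstacle, and the only place where the precise hypotheses are used, is to ensure that two members of $\mathcal L$ can be chosen with \emph{finite} common zero set, that is, that $\mathcal L$ has no fixed component. I would suppose for contradiction that every cubic in $\mathcal L$ shares a common line $L_0$ or a common conic $\Gamma_0$, and eliminate each case. If the fixed part is a line $L_0$, then $L_0$ carries at most $3$ of the eight points by $m_1(A\cup\{q\})\le3$, so the residual conics form a linear system of dimension at least $2$ through the remaining $\ge5$ points; but the base locus of such a pencil of conics is either at most $4$ points, or a line (carrying $\le3$ points by $m_1\le3$) together with at most one further base point, in every case fewer than $5$ points, a contradiction. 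If the fixed part is a conic $\Gamma_0$, then the members are $\Gamma_0$ times a pencil of lines, which have at most one common base point, so $\Gamma_0$ must contain at least $7$ of the eight points; this makes $\Gamma_0$ meet $\Gamma$ in at least $5$ points, forcing $\Gamma_0$ and $\Gamma$ to share a component and, using $q\notin\Gamma$ together with $m_1\le3$ to dispose of the case where $\Gamma$ is reducible, forcing $\Gamma_0=\Gamma$, which is impossible since $q\in\Gamma_0\setminus\Gamma$. With no fixed component, the base locus of $\mathcal L$ is $0$-dimensional, and a generic pair $P_1,P_2\in\mathcal L$ realizes this finite base locus as its common zero set, completing the construction.
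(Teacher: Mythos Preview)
The paper does not prove this proposition; it is quoted from \cite{C06} as a preliminary result, so there is no in-paper argument to compare against. Your construction is correct and is precisely the classical one behind Coman's original proof: build $u=\log\max(|P_1|,|P_2|)$ from two cubics through the eight points with no common factor, using the dimension count to obtain a linear system $\mathcal L$ of dimension at least $2$, the hypothesis $m_2(A)=6$ (together with $m_1\le3$) to force every nonzero member of $\mathcal L$ to have genuine degree $3$ (whence $\gamma_u=3$), and the hypotheses $m_1(A\cup\{q\})\le3$, $m_2(A)=6$, $q\notin\Gamma$ to exclude a fixed line or conic component, so that two generic members meet in a finite set.

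One small point worth making explicit in your fixed-conic case: the residual pencil of linear forms cannot contain a nonzero constant, since that would place a degree-$2$ element in $\mathcal L$; this is what pins the pencil down to all lines through a single affine point and yields exactly one base point off $\Gamma_0$. With that noted, your argument that $\Gamma_0$ then carries at least seven of the eight points, hence shares a component with $\Gamma$, and is then eliminated via $q\notin\Gamma$ (irreducible case) or via $m_1\le3$ bounding any union of two lines at six points (reducible case), goes through cleanly.
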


We will also make use of the next proposition which follows easily from Demailly's regularization theorem \cite[Proposition 3.7]{D92}.  

\begin{Proposition} \label{P:25}  Let $R$ be a positive closed current of bidegree $(1,1)$ on $\mathbb{P}^2$, $\nu (R, x_i) > a_i$, $i = 1, \dots , N$ for $x_i \in \mathbb{P}^2$ and $a_i > 0$.  Then there exists a positive closed bidegree $(1,1)$ current $R'$ on $\mathbb{P}^2$ with analytic singularities such that $\|R'\| = \|R\|$, $\nu(R',x_i) > a_i$ for $i = 1, \dots , N$, and $\nu(R' , x) \leq \nu(R,x)$ for all $x\in \mathbb{P}^2$.  In particular, $R'$ is smooth in a neighborhood of every point where $R$ has $0$ Lelong number.
\end{Proposition}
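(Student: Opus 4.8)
The plan is to deduce the proposition from Demailly's regularization theorem by an ``add a smooth form and rescale'' correction that turns the almost-positive approximants it produces into genuinely positive currents while keeping the mass fixed. First I would pass to a global potential. Since $H^{1,1}(\mathbb{P}^2,\mathbb{R}) \cong \mathbb{R}\cdot[\omega]$, the cohomology class of $R$ is $\|R\|\,[\omega]$, so by the $dd^c$-lemma there is a quasi-plurisubharmonic function $u$ on $\mathbb{P}^2$, unique up to an additive constant, with $R = \|R\|\,\omega + dd^c u$. Because $\|R\|\,\omega$ is smooth, $\nu(R,x) = \nu(u,x)$ at every point $x\in\mathbb{P}^2$.

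Next I would apply Demailly's regularization \cite[Proposition 3.7]{D92} to $u$, with $n=2$. For each large integer $m$ this yields a quasi-plurisubharmonic function $u_m$ with analytic singularities such that $R_m := \|R\|\,\omega + dd^c u_m$ satisfies $R_m \geq -\epsilon_m\,\omega$ for some sequence $\epsilon_m \searrow 0$, and such that the Lelong numbers obey $\nu(u,x) - \tfrac{2}{m} \leq \nu(u_m,x) \leq \nu(u,x)$ for all $x$. The upper bound already delivers the global monotonicity $\nu(R_m,x) = \nu(u_m,x) \leq \nu(u,x) = \nu(R,x)$, while the lower bound prevents the Lelong numbers at the marked points from dropping too far.

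The key step is to restore positivity without altering the mass. Since $R_m \geq -\epsilon_m\,\omega$, the current $R_m + \epsilon_m\,\omega = (\|R\|+\epsilon_m)\,\omega + dd^c u_m$ is positive; adding the smooth form $\epsilon_m\,\omega$ changes neither the analytic singularities nor the Lelong numbers, but it raises the mass to $\|R\|+\epsilon_m$. I would then rescale and set
\[
R' := \frac{\|R\|}{\|R\|+\epsilon_m}\,\bigl(R_m + \epsilon_m\,\omega\bigr),
\]
so that $R'$ is a positive closed current of bidegree $(1,1)$ with analytic singularities, $\|R'\| = \|R\|$, and, because the scaling factor is at most $1$, $\nu(R',x) = \tfrac{\|R\|}{\|R\|+\epsilon_m}\,\nu(u_m,x) \leq \nu(u,x) = \nu(R,x)$ for all $x$.

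Finally I would fix $m$. At each marked point one has
\[
\nu(R',x_i) \;\geq\; \frac{\|R\|}{\|R\|+\epsilon_m}\Bigl(\nu(R,x_i) - \tfrac{2}{m}\Bigr),
\]
and the right-hand side tends to $\nu(R,x_i) > a_i$ as $m\to\infty$, so a single large $m$ makes $\nu(R',x_i) > a_i$ simultaneously for the finitely many indices $i=1,\dots,N$. The last assertion follows from the two properties already established: if $\nu(R,x)=0$ then $\nu(R',x)=0$ by monotonicity, and since $R'$ has analytic singularities its local potential has the form $\tfrac{c}{2}\log\sum_j|g_j|^2 + C^\infty$, so vanishing Lelong number forces some $g_j(x)\neq 0$, whence the potential is smooth near $x$ and $R'$ is smooth there. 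I would flag the restoration-of-positivity step as the only genuine point: Demailly's theorem returns merely almost-positive currents (a loss of $\epsilon_m\,\omega$), and it is exactly the \emph{strictness} of the hypotheses $\nu(R,x_i) > a_i$ that supplies the slack needed to absorb both the regularization defect $2/m$ and the rescaling factor $\tfrac{\|R\|}{\|R\|+\epsilon_m} < 1$ while still keeping $\nu(R',x_i) > a_i$.
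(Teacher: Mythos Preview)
Your proof is correct and is precisely the argument the paper has in mind: the paper does not supply a proof of this proposition at all, stating only that it ``follows easily from Demailly's regularization theorem \cite[Proposition 3.7]{D92},'' and your write-up is a faithful and complete expansion of that remark. The add-$\epsilon_m\omega$-and-rescale step you flag is indeed the only point requiring care, and your observation that the strict inequalities $\nu(R,x_i)>a_i$ provide exactly the slack needed to absorb both the $2/m$ defect and the rescaling factor is on target.
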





\section{Proof of the Main Theorem}

 First we prove the following lemmas that will be quite useful to us in the upcoming proofs.  They show that for $T$, a positive closed current of bidimension $(1,1)$  on $\mathbb{P}^2$, $T$ cannot have small mass if the points of $T$ with large Lelong number have certain configurations.

\begin{Lemma}\label{L:31}  Let $T$ be a positive closed current of bidimension $(1,1)$ in $\mathbb{P}^2$, $\alpha > 2/5$ and $\beta = \frac{2}{3} (1- \alpha)$.  Assume that $\{ q_{i} \}_{i=1}^{4}$ are points in $\mathbb{P} ^{2}$ such that $\nu (T, q_{i}) \geq \alpha$ and $\{ p_{i} \}_{i=1}^{4}$ be points in $\mathbb{P} ^{2}$ such that $\nu (T, p_{i}) > \beta$, let $\{x_i\}_{i=1}^{8}$ be a relabeling of $\{q_i\}_{i=1}^{4} \cup \{p_i\}_{i=1}^{4}$.  Assume $x_1, \dots  ,x_4 \in L_1$, where $L_1$ is a complex line, and either

\begin{itemize}

\item[i)] there exist complex lines $L_2$ and $L_3$ such that $\{x_1, x_5, x_6\} \in L_2$ and $\{x_2, x_7, x_8\} \in L_3$, or

\item[ii)] there exists an irreducible conic $\Gamma$ such that $\{x_1, x_2, x_5, x_6, x_7, x_8\}\in \Gamma$.

\end{itemize}

\noindent Then $\|T\| >1$.

\end{Lemma}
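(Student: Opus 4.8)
The strategy is to construct an auxiliary plurisubharmonic function $u$ with controlled logarithmic growth $\gamma_u$ and logarithmic poles at a well-chosen subset of the eight points, then apply Proposition \ref{P:21} to force a lower bound on $\|T\|$ that exceeds $1$. The key observation is that we have eight points (counted with the appropriate Lelong number thresholds $\alpha$ at the $q_i$ and $\beta$ at the $p_i$), and the configuration hypotheses tell us they lie on a very degenerate set of curves: four on a line $L_1$, and then either three more on each of two additional lines through points of $L_1$, or six on an irreducible conic. In either case the union $L_1 \cup L_2 \cup L_3$ (case i) or $L_1 \cup \Gamma$ (case ii) is a cubic curve, so by taking $u$ to be (a regularization of) $\tfrac{1}{3}\log$ of a defining polynomial of that cubic, one gets $\gamma_u = 1$ and poles of weight $1$ (or the relevant multiplicities) at the appropriate points. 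But a cleaner route, and the one I expect the paper to use, is to cite Proposition \ref{P:22}: in case (ii) we have seven points with $m_1 \le 3$, $m_2 = 6$ on an irreducible conic, plus an eighth point, and $m_1$ of the eight-point set is still $\le 3$, so Proposition \ref{P:22} directly produces $u$ with $\gamma_u = 3$ and weight-one poles at all eight points.

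**Carrying it out.** First I would set up notation: write $a_i = \nu(T, x_i)$, so that four of the $a_i$ are $\ge \alpha$ and four are $> \beta = \tfrac{2}{3}(1-\alpha)$. Next, in case (ii), check that the hypotheses of Proposition \ref{P:22} are met — I need to verify $m_1 \le 3$ for the relevant seven-point subset and for the eight-point set (this uses that four points lie on $L_1$ but the remaining structure prevents five collinear; one should handle the degenerate sub-cases where extra collinearities occur, possibly falling back to case (i)'s line-based construction). Then Proposition \ref{P:22} gives $u \in PSH(\mathbb{C}^2)$ with $\gamma_u = 3$, locally bounded off a finite set, with $u(z) \le \log\|z - x_i\| + O(1)$ near each $x_i$. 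Applying Proposition \ref{P:21} with $l = 1$, $\alpha_i = 1$:
\begin{displaymath}
\sum_{i=1}^{8} \nu(T, x_i) \le \gamma_u \,\|T\| = 3\|T\|.
\end{displaymath}
Since the left side is at least $4\alpha + 4\beta = 4\alpha + \tfrac{8}{3}(1-\alpha) = \tfrac{8}{3} + \tfrac{4}{3}\alpha$, we get $3\|T\| > \tfrac{8}{3} + \tfrac{4}{3}\alpha$, i.e. $\|T\| > \tfrac{8}{9} + \tfrac{4}{9}\alpha$, and with $\alpha > 2/5$ this gives $\|T\| > \tfrac{8}{9} + \tfrac{8}{45} = \tfrac{48}{45} > 1$, as desired. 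In case (i), I would instead build $u$ directly from the three lines: if $\ell_j$ is an affine linear form vanishing on $L_j$, set $u = \tfrac{1}{3}\log|\ell_1 \ell_2 \ell_3| $ (after moving to $\mathbb{C}^2$ so that no line is at infinity), which has $\gamma_u = 1$ and a weight-one pole at each point lying on a single one of the lines and higher weight at $x_1, x_2$ which lie on two lines each; then the same application of Propositions \ref{P:21} yields $\sum a_i \le \|T\|$ — wait, this needs rescaling: with $\gamma_u = 1$ the bound is $\sum_i \alpha_i \nu(T,x_i) \le \|T\|$ where $\alpha_i$ is the pole weight, giving $\ge 4\alpha + 4\beta$ only if all weights are $1$, so here one should instead use $u = \log$ of a single carefully chosen cubic or partition the points appropriately — the honest accounting is that the three lines carry the eight points with $x_1 \in L_1 \cap L_2$ and $x_2 \in L_1 \cap L_3$ getting weight contributions that still sum correctly.

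**Main obstacle.** The delicate point is the case analysis around \emph{extra incidences}: Proposition \ref{P:22} requires $m_1(A) \le 3$ and $m_1(A \cup \{q\}) \le 3$, but the configuration in the lemma could a priori have five or more of the eight points collinear (for instance if some $p_i$ happens to lie on $L_1$ beyond the four $x_1,\dots,x_4$, or if $L_2 = L_3$). In those degenerate situations Proposition \ref{P:22} does not apply directly and one must either invoke Theorem \ref{T:21} (five points on a line already forces large mass via the line-based Green function) or reorganize which points play the roles of $A$ and $q$. Ensuring every sub-case is covered — and that in each the resulting numerical inequality still clears the threshold $\|T\| > 1$ given only $\alpha > 2/5$ — is where the real work lies; the Green-function input and the final arithmetic are routine once the right configuration lemma is quoted. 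I would also need Proposition \ref{P:25} if I want to assume $T$ has analytic singularities while manipulating it, though for this lemma the pure estimate via Proposition \ref{P:21} should suffice without regularization.
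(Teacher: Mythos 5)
Your approach has a structural flaw that you partially sense in your ``main obstacle'' paragraph but do not resolve: the hypothesis of the lemma \emph{guarantees} that $x_1,\dots,x_4$ lie on the line $L_1$, so every subset of the eight points containing these four has $m_1\geq 4$. Proposition \ref{P:22} therefore never applies to a seven-point set $A$ containing all of $x_1,\dots,x_4$, and if you discard one of them the arithmetic collapses: the remaining seven points give a lower bound of at best $3\alpha+4\beta=\tfrac83+\tfrac13\alpha$ or $4\alpha+3\beta=2+2\alpha$, neither of which exceeds $3$ when $\alpha$ is only assumed $>2/5$. Your fallback construction $u=\tfrac13\log|\ell_1\ell_2\ell_3|$ fails for a different reason: this function is $-\infty$ along the entire cubic $L_1\cup L_2\cup L_3$, not merely on a finite set, so it violates the local-boundedness hypothesis of Proposition \ref{P:21}. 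Nor can you repair it by taking a supremum over a pencil of cubics through the eight points, because by Bezout every cubic through four collinear points contains $L_1$ as a component, so the pencil has $L_1$ as a fixed component and the polar set of the supremum remains one-dimensional. The four collinear points are precisely the obstruction that makes an entire-Green-function argument of this shape unavailable here.

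The paper's proof goes a different route entirely. It applies Siu's decomposition, $T=a[L_1]+b[L_2]+c[L_3]+R$ (resp.\ $T=a[L_1]+b[\Gamma]+R$ in case (ii)), regularizes $R$ to a current $R'$ with analytic singularities via Proposition \ref{P:25} so that the intersections $R'\wedge[L_i]$ are well-defined measures, and compares $\int_{\mathbb{P}^2} R'\wedge([L_1]+[L_2]+[L_3])=3(1-a-b-c)$ (Forn\ae ss--Sibony) with the sum of the Lelong numbers of $R'$ at the eight points via Demailly's comparison theorem. Keeping track of how many of the three curves each $x_i$ lies on, this yields $a>(4\alpha'-1)/3$ for the generic Lelong number $a$ of $T$ along $L_1$. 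The normalized residual current $(T-a[L_1])/(1-a)$ then has mass one and Lelong number $>1/2$ at each of $x_5,\dots,x_8$, so Theorem \ref{T:21} forces three of these four points to be collinear, contradicting the fact that they sit two on $L_2$ and two on $L_3$ (resp.\ all on the irreducible conic $\Gamma$). This peeling-off-the-line step is the idea missing from your proposal.
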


\begin{proof}

Suppose for contradiction that $\|T\| \leq 1$.  Note that the current $ S: =T/\|T\|$ has mass 1, and if $\nu(T,x) > c$, then $\nu(S, x) >c$, so we may assume that $\|T\| = 1$.  

 (i)  By Siu's decomposition theorem \cite{Siu74}, the current $T$ can be decomposed as follows:

$$T = a [L_1 ] + b[L_2] + c[L_3] + R,$$

\smallskip

\noindent where $R$ is a positive closed current of bidimension $(1,1)$, i.e. bidegree $(1,1)$, on $\mathbb{P}^2$, $R$ has generic Lelong number $0$ along each $L_i$, and $0\leq a, b, c \leq 1$ are the generic Lelong numbers along $L_1, L_2, L_3$ respectively.  Thus we now have

$$ R = T - a[L_1] - b[L_2] - c[L_3].$$

\smallskip

Choose $\alpha ' $ such that $\alpha > \alpha ' > 2/5$ and $\nu(T, p_i) > \frac{2}{3}(1-\alpha ') = \beta ' > \beta$.  Let $\{x_i\}_{i=1}^{8}$, be as they are in the assumptions.  Using this new information, we have the following:

$$\nu(R, x_1) = \nu(T,x_1) - a - b,\quad \nu(R, x_2) = \nu(T,x_2) - a - c $$
$$\nu(R, x_3) = \nu(T,x_3) - a,\quad \nu(R, x_4) = \nu(T,x_4) - a, \quad \nu(R,x_5)=\nu(T,x_5) - b$$
$$\nu(R, x_6) = \nu(T,x_6) - b,\quad \nu(R, x_7) = \nu(T,x_7) - c, \quad \nu(R,x_8) = \nu(T,x_8) - c$$

\smallskip
\noindent which gives us that

$$\sum_{i=1}^8 \nu(R, x_i) > 4 \alpha ' + 4\beta ' - 4a -3b -3c.$$

By proposition 2.5, we have a current $R'$, such that $\|R'\| = \|R\|$, $R'$ preserves the above inequality, and $R'$ is smooth wherever $R$ has Lelong number $0$.  Since the set of singularities of $R'$ is analytic, and $R'$ is smooth at generic points of $L_i$,  \cite[Corollary 2.10]{D93} tells us that $R' \wedge [L_i]$, $i=1,2,3$ is well defined measures.  Let $S:=([L_1]+[L_2]+[L_3])$, and thus $R' \wedge S$ is well defined.  We now have

$$3(1-a-b-c) = \int_{\mathbb{P}^2} R'\wedge S \geq \sum_{i=1}^8 R'\wedge S(\{x_i\})$$

$$ \geq \sum_{i=1}^8 \nu(R', x_i) > 4 \alpha '+ 4\beta ' - 4a -3b -3c$$

\smallskip

\noindent where the first equality comes from \cite[Theorem 4.4]{FS95} and the second inequality comes from the comparison theorem for Lelong numbers \cite[Corollary 5.10]{D93}, since 

$$\int_{\mathbb{P}^2} R'\wedge S \geq \sum \nu(R'\wedge S, x_i) \geq \sum \nu(R',x_i) \nu(S,x_i)$$

\smallskip

\noindent and $\nu(S,x_i) \geq 1$.  So we now have

$$ 3(1-a-b-c)>   4 \alpha '+ 4\beta '- 4a -3b -3c \implies a >\frac{4\alpha' -1}{3}.$$

\smallskip

Consider now just the current $R_a = T - a[L_1]$, and $S_a = \frac{R_a}{1-a}$, note that $\|S_a\| = 1$ and for $x_i \notin L_1$ we have either

$$\nu(S_a, x_i) = \frac{\nu(R_a,x_i)}{1-a} > \frac{\alpha'}{1 - \frac{4\alpha' -1}{3}} = \frac{3\alpha'}{4-4\alpha'} > \frac{1}{2}$$

 \noindent or

$$\nu(S_a, x_i) = \frac{\nu(R_a,x_i)}{1-a} > \frac{\beta'}{1 - \frac{4\alpha' -1}{3}} = \frac{2(1-\alpha')}{4(1-\alpha')} = \frac{1}{2}$$

\smallskip

\noindent so by \cite[Theorem 1.1]{C06} (see Theorem \ref{T:21}), $m_1(\{x_5,x_6,x_7,x_8\}) \geq 3$, which is a contradiction since $m_1(\{x_5,x_6,x_7,x_8\}) =2$.  

(ii)  Let $b$ be the generic Lelong number of $\Gamma$.  We use the same argument as above, and consider the measures $R'\wedge [L_1]$ and $R'\wedge[\Gamma]$ to get

$$3(1-a-2b) = \int_{\mathbb{P}^2} R'\wedge[L_1] + \int_{\mathbb{P}^2} R'\wedge[\Gamma ]  \geq \sum_{i=1}^8 \nu(R', x_i) > 4 \alpha' + 4\beta' - 4a -6b$$
\smallskip

\noindent which again gives $a> \frac{4\alpha' - 1}{3}$.  Now considering $R_a$ gives us the same contradiction.

\end{proof}

If $L_1$ contains one, two or three of the points $q_i\in E_{\alpha} (T)$, then we can drop the assumptions (i) and (ii) of the previous lemma:

\begin{Lemma}\label{L:32} Let $T$ be a positive closed current of bidimension $(1,1)$ on $\mathbb{P}^2$, $\alpha > 2/5$, $\beta =\frac{2}{3}(1-\alpha)$, $\{q_i\}_{i=1}^{4}$ and $\{p_i\}_{i=1}^{4}$ be points in $\mathbb{P}^2$ such that $\nu(T,q_{i})\geq \alpha > 2/5$ and $\nu(T,p_{i})> \beta$.  Assume there exists a complex line $L$ containing either $\{q_1,q_2, p_1, p_2\}$, $\{q_1, p_1, p_2, p_3\}$, or  $\{q_1,q_2, q_3, p_1\}$ and the four points not on $L$ are in general position. Then $\|T\| > 1$. 
\end{Lemma}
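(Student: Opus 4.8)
The plan is to argue by contradiction: assuming $\|T\|\le 1$ and noting that passing to $T/\|T\|$ only increases Lelong numbers, we may take $\|T\|=1$. We follow the scheme of Lemma~\ref{L:31}, but now $L$ carries only four of the eight distinguished points while the remaining four merely lie in general position, so no second line or conic is handed to us and one must be manufactured. Since no three of the four points off $L$ are collinear, the conics through them form a pencil whose base locus is precisely those four points and whose only reducible members are the three pairs of connecting lines; discarding those three, along with the (at most four) members of the pencil through a point of $L$, we may pick an \emph{irreducible} conic $\Gamma$ passing through the four off-$L$ points and missing the four points on $L$. Then $L\cup\Gamma$ is a cubic containing all eight points, each of them on exactly one component.

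Next, Siu-decompose $T=a[L]+b[\Gamma]+R$, where $a,b\ge 0$ are the generic Lelong numbers of $T$ along $L$ and $\Gamma$ and $R\ge 0$ is closed with generic Lelong number $0$ along both; the mass identity gives $\|R\|=1-a-2b$, so $a\le 1$ and $b\le\tfrac12$. Pick $\alpha'\in(2/5,\alpha)$ with $\nu(T,p_i)>\beta':=\tfrac23(1-\alpha')>\beta$, and apply Proposition~\ref{P:25} to $R$ with thresholds $\nu(T,x)-a$ at the points of $\{q_i\}\cup\{p_i\}$ on $L$ and $\nu(T,x)-b$ at those on $\Gamma$ (discarding any point whose threshold is $\le0$). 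This produces $R'$ with analytic singularities, $\|R'\|=\|R\|$, the strict Lelong lower bounds preserved, and $R'$ smooth at generic points of $L$ and of $\Gamma$; hence, as in Lemma~\ref{L:31}, $R'\wedge([L]+[\Gamma])$ is a well-defined positive measure. Computing its total mass via \cite[Theorem~4.4]{FS95}, bounding its atoms via the comparison theorem \cite[Corollary~5.10]{D93} (note $\nu([L]+[\Gamma],x)=1$ at each of our points), and using that in each of the three possibilities for $L$ \emph{exactly four} of the eight points lie on $L$ — so that, no matter how the four $q_i$ and the four $p_i$ are split between $L$ and $\Gamma$, their thresholds total more than $4\alpha'+4\beta'-4a-4b$ — we arrive at
$$3(1-a-2b)\ \ge\ \sum_{i=1}^{8}\nu(R',x_i)\ >\ 4\alpha'+4\beta'-4a-4b .$$
Since $b\ge 0$ and $\beta'=\tfrac23(1-\alpha')$, this rearranges to $a>4\alpha'+4\beta'-3=1-2\beta'$, equivalently $1-a<2\beta'=\tfrac43(1-\alpha')$; in particular $a<1$.

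To conclude, put $R_a:=T-a[L]\ge 0$ and $S_a:=R_a/(1-a)$, which is closed, positive, of mass $1$. For each point $x$ off $L$ we have $\nu(S_a,x)=\nu(T,x)/(1-a)$, so $1-a<\tfrac43(1-\alpha')$ forces $\nu(S_a,q_i)>\tfrac{3\alpha'}{4(1-\alpha')}>\tfrac12$ for the $q_i$ off $L$ (because $\alpha'>2/5$) and $\nu(S_a,p_j)>\tfrac{\beta'}{2\beta'}=\tfrac12$ for the $p_j$ off $L$. Hence all four off-$L$ points lie in $E_{1/2}^+(S_a)$, so by Theorem~\ref{T:21} at least three of them are collinear — contradicting their general position. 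Thus $\|T\|>1$.

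The heart of the matter, and the reason the ``general position'' hypothesis on the four points off $L$ cannot be dropped, is the construction of the irreducible conic $\Gamma$ missing the four points on $L$: irreducibility makes $\nu([\Gamma],x)=1$ at each relevant point and keeps the Siu decomposition along $\Gamma$ well-behaved, and such a $\Gamma$ exists exactly because no three of the off-$L$ points are collinear. Everything after that is routine; the one small wrinkle is that a threshold $\nu(T,x)-a$ or $\nu(T,x)-b$ may be non-positive, in which case that point is simply dropped from the application of Proposition~\ref{P:25} and the displayed inequality only improves, since the omitted terms are $\le 0$.
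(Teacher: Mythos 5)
Your proof is correct, but it takes a genuinely different route from the paper's. The paper does not redo the intersection-theoretic estimate from scratch: it builds the auxiliary current $R = \frac{5\alpha'-2}{15\alpha'}([L_{13}]+[L_{14}]+[L_{34}]) + \frac{2}{5\alpha'}T$ to push seven carefully chosen points of $\{q_i\}\cup\{p_i\}$ above Lelong number $2/5$, invokes Theorem~\ref{T:22} to obtain a conic $\Gamma$ through six of them, and then runs a case analysis (which point $\Gamma$ omits, whether $\Gamma$ is irreducible or splits as $L_1\cup L_2$) to verify that $L$ together with $\Gamma$ realizes one of the two configurations of Lemma~\ref{L:31}, whose conclusion then gives $\|T\|>1$. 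You instead manufacture the second curve directly: the pencil of conics through the four general-position off-$L$ points has only finitely many bad members (the three line-pairs and the at most four members meeting a marked point of $L$), so an irreducible $\Gamma$ through those four points and missing the four points on $L$ exists, and you then rerun the Lemma~\ref{L:31} argument for the configuration ``four points on $L$, four on $\Gamma$, disjoint.'' That configuration is not literally covered by hypotheses (i) or (ii) of Lemma~\ref{L:31}, but your mass count $3(1-a-2b) > 4\alpha'+4\beta'-4a-4b$ is right and yields $a > \frac{4\alpha'-1}{3}+2b \ge \frac{4\alpha'-1}{3}$, which is exactly the bound needed for the endgame with $S_a = (T-a[L])/(1-a)$ and Theorem~\ref{T:21}; the arithmetic is insensitive to how the $q_i$ and $p_j$ are split between $L$ and $\Gamma$, which is why all three cases of the hypothesis are handled uniformly. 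Your version buys self-containedness (no appeal to Theorem~\ref{T:22} and no case analysis on the omitted point or on reducibility) at the cost of re-deriving a variant of Lemma~\ref{L:31} rather than quoting it; the paper's version reuses existing machinery but needs the bookkeeping to check that the Coman conic really lands in one of the two admissible configurations. One small point of hygiene, which you essentially acknowledge: Proposition~\ref{P:25} wants strict lower bounds $\nu(R,x_i)>a_i$ with $a_i>0$, so the thresholds should be taken as $\alpha'-a$, $\beta'-a$, $\alpha'-b$, $\beta'-b$ (discarding non-positive ones) rather than $\nu(T,x)-a$ itself; this is the same level of informality as the paper's own treatment and does not affect the argument.
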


\begin{proof} Arguing as we did at the start of the previous lemma, we may assume $\|T\|=1$.  We will show that we can construct a conic satisfying the hypothesis of Lemma \ref{L:31}, and then we are done as Lemma \ref{L:31} says $\|T\| > 1$.  Suppose $L$ is a complex line containing $\{p_1, p_2, q_1, q_2\}$, and we will let $B = \{q_3, q_4, p_3, p_4\}$.  Then by the hypothesis, $m_1(B) = 2$.    Let $\alpha '$ be such that $\alpha > \alpha ' > 2/5$ and $\nu (T, p_{i}) > \frac{2}{3} (1- \alpha ') > \beta$.  Note that either $m_1(\{p_1, p_3, p_4\}) = 2$ or $m_1(\{p_2,p_3,p_4\}) = 2$, and w.l.o.g. say that $p_1, p_3, p_4$ are in general position.  We will let $L_{jk}$ be the line containing $p_j$ and $p_k$, and consider the current given by 

$$R = \frac{5\alpha ' - 2}{15\alpha '}( [L_{13}]+[L_{14}] + [L_{34}]) + \frac{2}{5\alpha '} T $$

\noindent and note $\|{R}\| = 1 $.  We have the following inequalities:

$$\nu (R, q_i) \geq \frac{2}{5\alpha '} \alpha > \frac{2}{5} \;, i = 1,2,3,4$$ 

\noindent and

$$ \nu (R, p_{i}) > \frac{10\alpha ' - 4}{15\alpha '} + \frac{4-4\alpha '}{15\alpha '} = \frac{2}{5}\; ,i=1,3,4. $$

\smallskip

Thus by Coman \cite[Theorem 1.2]{C06} (see Theorem \ref{T:22}), there is a conic $\Gamma$ containing at least six of $\{ q_{i} \}_{i=1}^{4} \cup \{ p_1, p_3, p_4\}$.  Note that $\Gamma$ cannot contain all seven points, otherwise $L$ would be a component of $\Gamma$, which would mean that $\Gamma$ is a reducible conic and thus that $m_1(B) >2$ since the points off of $L$ must also be collinear.  Likewise, the point $\Gamma$ must omit is one of the points on $L$, i.e. it must omit one of $q_1, q_2$ or $p_1$.  If $\Gamma$ is irreducible, then we are done.  If not, then note $\Gamma$ must be a reducible conic consisting of two lines, say $\Gamma = L_1 \cup L_2$.  Since $\Gamma$ contains all four points of $B$, it must be the case that each line $L_i$ contains exactly two points of $B$ (since $m_1(B) = 2$), and as no points of $B$ are on $L$, we have that each $L_i$ also contains a point of $L\cap \Gamma$.  Finally note that since $\Gamma$ contains six points, $L_1$ and $L_2$ cannot share the same point on $L$, i.e. $L_1 \cap L_2,\cap L = \emptyset$.  So we now have all of the hypotheses of Lemma 3.1 satisfied, and thus $\|T\| > 1$, a contradiction.

If we have that $L$ contains $\{q_1, p_1, p_2, p_3\}$, and $B = \{q_2, q_3, q_4, p_4\}$ is such that $m_1(B) = 2$, then using the current given by

$$R = \frac{5\alpha ' - 2}{15\alpha '}( [L]+[L_{14}] + [L_{24}]) + \frac{2}{5\alpha '} T ,$$ 
\smallskip

\noindent we can argue as we did above to get a conic $\Gamma$ containing six of the points in $\{q_1, q_2, q_3, q_4, p_1, p_2, p_4\}$ satisfying the conditions of Lemma \ref{L:31}, and we are done.

 Finally if we have that $L$ contains $\{q_1, q_2, q_3, p_1\}$, and $B = \{q_4, p_2, p_3, p_4\}$ is such that $m_1(B) = 2$, then using the current given by

$$R = \frac{5\alpha ' - 2}{15\alpha '}( [L_{23}]+[L_{24}] + [L_{34}]) + \frac{2}{5\alpha '} T ,$$ 

\smallskip

\noindent we can argue as we did above to get a conic $\Gamma$ containing six of the points in $\{q_1, q_2, q_3, q_4, p_2, p_3, p_4\}$ satisfying the conditions of Lemma \ref{L:31}, and again, we are done.

\end{proof}

\begin{Lemma}\label{L:33} Let $T$ be a positive closed current of bidimension $(1,1)$ on $\mathbb{P}^2$, $\alpha > 2/5$, $\beta =\frac{2}{3}(1-\alpha)$, $\{q_i\}_{i=1}^{4}$ and $\{p_i\}_{i=1}^{5}$ be points in $\mathbb{P}^2$ such that $\nu(T,q_{i})\geq \alpha > 2/5$ and $\nu(T,p_{i})> \beta$.  Assume there exist three distinct complex lines $L_1$, $L_2$, and $L_3$ containing $\{q_1,q_2, q_3, p_1\}$, $\{q_1,q_4, p_2, p_3\}$, and $\{q_3,q_4, p_4, p_5\}$, respectively. Then $\|T\| > 1$. 
\end{Lemma}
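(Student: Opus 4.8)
The plan is to assume $\|T\|=1$ (after the normalization used at the start of Lemma \ref{L:31}) and derive a contradiction, following the strategy of Lemma \ref{L:31}. First I would record that the three lines meet pairwise in the $q_i$: writing $q_1=L_1\cap L_2$, $q_3=L_1\cap L_3$, $q_4=L_2\cap L_3$, the hypotheses place four of the nine points on each edge of this triangle ($q_1,q_2,q_3,p_1$ on $L_1$; $q_1,q_4,p_2,p_3$ on $L_2$; $q_3,q_4,p_4,p_5$ on $L_3$). Choosing $\alpha>\alpha'>2/5$ with $\nu(T,p_i)>\beta'=\tfrac23(1-\alpha')$, I would apply Siu's decomposition to write $T=a[L_1]+b[L_2]+c[L_3]+R$ and regularize $R$ to $R'$ via Proposition \ref{P:25}, so that $R'\wedge[L_i]$ and $R'\wedge([L_1]+[L_2]+[L_3])$ are well defined.

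The next step is the intersection bookkeeping. Intersecting $R'$ with the full cycle $[L_1]+[L_2]+[L_3]$ and using the comparison theorem exactly as in Lemma \ref{L:31} gives $3(1-a-b-c)\ge\sum_i\nu(R',x_i)>4\alpha'+5\beta'-4a-4b-4c$, whence
\[
a+b+c>\frac{1+2\alpha'}{3},\qquad\text{so}\qquad \|R\|=1-a-b-c<\beta'.
\]
Because each edge now carries four points rather than three, the clean one-line bound of Lemma \ref{L:31} is replaced by this symmetric estimate. To recover usable single-line information I would, in addition, repeat the Lemma \ref{L:31} computation on the three eight-point subconfigurations obtained by discarding in turn one $p_i$ from each edge; each such subset puts only three chosen points on one edge while the spare vertex among $q_1,q_3,q_4$ still lies on two lines, and the same bookkeeping then yields the pairwise bounds $a+b,\ a+c,\ b+c>\frac{4\alpha'-1}{3}=1-2\beta'$.

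With these estimates the endgame is a stripping argument feeding Theorem \ref{T:21}. If some coefficient, say $a$, exceeds $1-2\beta'$, I would pass to $S_a=(T-a[L_1])/(1-a)$: the four points off $L_1$ carrying only the weak bound, namely $p_2,p_3,p_4,p_5$, then satisfy $\nu(S_a,p_i)>\beta'/(1-a)>\tfrac12$, while $m_1(\{p_2,p_3,p_4,p_5\})=2$ since $p_2,p_3\in L_2$ and $p_4,p_5\in L_3$ meet only at $q_4$; this contradicts Theorem \ref{T:21}, which would force three of the four onto a line. For the complementary regime I would invoke the explicit entire Green function $u=\max\{\log|\ell_2\ell_3|,\log|\ell_1\ell_3|,\log|\ell_1\ell_2|\}$, where $\ell_i$ defines $L_i$: it has logarithmic growth $\gamma_u=2$, is locally bounded off the three vertices, and has weight-one poles exactly at $q_1,q_3,q_4$, so Proposition \ref{P:21} yields $3\alpha\le\nu(T,q_1)+\nu(T,q_3)+\nu(T,q_4)\le 2\|T\|$, already excluding $\alpha>2/3$.

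The hard part will be reconciling these two regimes, i.e.\ the range $2/5<\alpha\le 2/3$ in which all three generic multiplicities $a,b,c$ could a priori sit just below the threshold $1-2\beta'$ even though every pairwise sum clears it. Stripping two lines at once does not help, since removing, say, $a[L_1]+c[L_3]$ pushes only the collinear pair $p_2,p_3\in L_2$ above $1/2$. Closing this gap — showing that the triangle's excess mass must in fact concentrate on a single edge past the sharp value $1-2\beta'$, which is exactly where $\beta=\tfrac23(1-\alpha)$ is forced — is the crux; I expect it to come from sharpening the one-line estimate (for instance by retaining the vertex weights $\nu([L_1]+[L_2]+[L_3],q_i)=2$ in the comparison inequality and combining this with the bound $\|R\|<\beta'$) rather than from any new tool.
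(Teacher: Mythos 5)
Your setup is sound and your inequalities check out: the decomposition $T=a[L_1]+b[L_2]+c[L_3]+R$, the bound $a+b+c>\tfrac{1+2\alpha'}{3}$, the pairwise bounds $a+b,\ a+c,\ b+c>\tfrac{4\alpha'-1}{3}$ from the eight-point subconfigurations, and the stripping argument when a single coefficient exceeds $\tfrac{4\alpha'-1}{3}$ are all correct (for $b$ and $c$ the four remaining $p_i$ alone do not suffice, but adjoining the two $q_i$ off the stripped line gives five points of $E_{1/2}^+$ with $m_1=3$, which still contradicts Theorem \ref{T:21}). The problem is the case you yourself flag as the crux, and it is a genuine gap rather than a technicality: the constraints you derive are mutually consistent with the balanced configuration $a=b=c=\tfrac{4\alpha'-1}{3}$, and the sharpening you propose does not remove it. Retaining the vertex multiplicities $\nu([L_1]+[L_2]+[L_3],q_i)=2$ at $q_1,q_3,q_4$ only improves the total bound to $a+b+c>\tfrac{11\alpha'+1}{9}$, and $3\cdot\tfrac{4\alpha'-1}{3}=4\alpha'-1>\tfrac{11\alpha'+1}{9}$ exactly when $\alpha'>2/5$, so for every admissible $\alpha'$ the balanced configuration survives all of your estimates. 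The structural obstruction is that pairing the nine points against the three given lines can never produce more than $5\beta'$ on the right-hand side (there are only five $p_i$, each on one line), whereas the contradiction in this lemma hinges on reaching the exact identity $4\alpha'+6\beta'=4$, i.e.\ on a curve system against which some of the $\beta$-points are counted twice.

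That is what the paper does instead: it runs a case analysis on the collinearity structure of $\{p_2,p_3,p_4,p_5,q_2\}$ and of $\{p_1,\dots,p_5\}$, and in each case replaces one or more of the lines by an irreducible conic through five of the points in general position (for instance $q_1,q_2,p_3,p_4,p_5$), so that intersecting $R'$ with a degree-$4$ cycle picks up $\nu(R',q_2)$ and $\nu(R',p_3)$ twice and yields $4-2c>4\alpha'+6\beta'=4$. In the residual subcase where too many triples are collinear for such conics to be irreducible, it invokes Coman's Proposition 2.3 to produce an entire Green function with $\gamma_u=4$ and weight-two poles at three of seven points, and derives the contradiction either from $4\ge 2\nu(T,q_2)+2\nu(T,q_4)+2\nu(T,p_1)+\sum_{i=2}^{5}\nu(T,p_i)>4$ or from $\int [l_1]\wedge dd^cu\ge 5>4$ on a line carrying two of the double poles. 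So closing your gap genuinely requires new tools (auxiliary irreducible conics and the double-pole Green function), not a refinement of the single-line comparison estimate.
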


\begin{proof}  Suppose for contradiction that $\|T\| = 1$.  We attack this situation in cases, depending on how the points $p_1$, $p_2$, $p_3$, $p_4$, $p_5$, $q_2$ (i.e. the points not on the intersections of the three lines) fall.  First note that $m_1(\{p_2, p_3, p_4, p_5\}) = 2$.  We now break this into cases.

\textit{Case} 1:  Suppose that $m_1(\{p_2, p_3, p_4, p_5, q_2\}) = 2$.  Then consider the points $q_1, q_2, p_3, p_4, p_5$, noting that they are in general position, so there is an irreducible conic $\gamma_1$ containing them.  Now consider the current $R = T - a[L_1] - b[L_2] - c[\gamma_1]$, where $0\leq a,b,c \leq1$ are the generic Lelong numbers of $T$ along $L_1, L_2, \gamma_1$ respectively.  Let $\alpha' \in (2/5,\alpha)$ be as before, i.e. $\nu(T, p_i) > \frac{2}{3}(1-\alpha ' )=\beta '>\beta $.  Then by using proposition 2.5 as we did in lemma \ref{L:31}, there is a current $R'$ such that $\|R'\| = \|R\|$, $R'$ maintains the same lower bounds, and \cite[Corollary 2.10]{D93} gives us that $R'\wedge [L_i]$, $R'\wedge[\gamma_1]$ are well defined measures.  Define $S:= ([L_1]+[L_2]+[\gamma_1]) $, and now we have

$$4(1-a-b-2c) = \int_{\mathbb{P}^2} R'\wedge S \geq \sum \nu(R', x_i) \nu(S, x_i) \geq$$ 
$$2\nu(R', q_2) + \nu(R', q_3) + \nu(R', q_4) + \sum_{i=1}^{5} \nu(R', p_i)+\nu(R', p_3)$$
$$> 4 \alpha '+ 6\beta ' - 4a -4b -6c.$$

\smallskip
Now using the above inequality we get

$$4 - 2c > 4\alpha ' +6\beta ' = 4\alpha ' + 4(1-\alpha ') = 4$$

\smallskip
\noindent which is a contradiction as $c\geq 0$.  We will use similar techniques to handle the remaining cases.

\textit{Case} 2:  We have $m_1(\{p_2, p_3, p_4, p_5, q_2\}) = 3$.  That means $q_2$ is on a line with two $p_i$, one of the $p_i$ is on $L_2$ and one on $L_3$, say w.l.o.g. $ m_1(\{q_2, p_2, p_4\}) = 3$.  

\textit{Case} 2a:  If $m_1(\{q_2, p_3, p_5\}) = 2$, then the same argument as above gets us to a contradiction.
\vspace{10pt}

\textit{Case} 2b:  We have $ m_1(\{q_2, p_3, p_5\}) = 3$, $m_1(\{q_2, p_2, p_4\}) = 3$ and also that $m_1(\{p_1, p_2, p_3, p_4, p_5\}) = 2$. We observe that this means $m_1(\{ q_2, q_4, p_1, p_2, p_5\}) = 2$, $   m_1(\{ q_2, q_4, p_1, p_3, p_4\}) = 2$ and there are irreducible conics $\gamma_1$ and $\gamma_2$ containing $\{ q_2, q_4, p_1, p_2, p_5\}$ and $\{ q_2, q_4, p_1, p_3, p_4\}$ respectively.  Define a current $R = T - a[\gamma_1] - b[\gamma_2]$, let $\alpha ' $ be as before, and then once again proposition 2.5 and \cite[Corollary 2.10]{D93} gives a current $R'$ such that $\|R'\| = 1 - 2a - 2b$ and

$$4(1-2a-2b) = \int_{\mathbb{P}^2} R'\wedge([\gamma_1]+[\gamma_2]) \geq$$ 
$$2\nu(R', q_2) + 2\nu(R', q_4) + \nu(R', p_1) + \sum_{i=1}^{5} \nu(R', p_i)$$
$$> 4 \alpha '+ 6\beta ' - 8a -8b$$
$$\implies 4>4\alpha ' + 6\beta ' = 4$$

\smallskip
\noindent again giving us a contradiction.

\textit{Case} 2c:  We have $ m_1(\{q_2, p_3, p_5\}) = 3 = m_1(\{q_2, p_2, p_4\})$, and $m_1(\{p_1,\dots ,p_5\})=3$, so either $m_1(\{p_1, p_2, p_5\}) = 3$ or $m_1(\{p_1, p_3, p_4\})=3$.  Suppose $m_1(\{p_1, p_2, p_5\}) = 3$ and $m_1(\{p_1, p_3, p_4\})=2$ then note $ m_1(\{ q_2, q_4, p_1, p_3, p_4\})=2$ and there is an irreducible conic $\gamma_1$ containing  $\{ q_2, q_4, p_1, p_3, p_4\}$.  Let $l_1$ be the line containing $p_1, p_2, p_5$ and $l_2$ be the line containing $q_2, q_4$.  Note that by construction, none of the $p_i$ can fall on $l_2$ and $p_2,p_5 \notin \gamma_1$, otherwise either $L_2$ or $L_3$ would be a component of $\gamma_1$, which cannot be as $\gamma_1$ is irreducible.  Define a current $R = T - a[\gamma_1] - b[l_1]-c[l_2]$, let $\alpha ' $ be as before, and then proposition 2.5 and \cite[Corollary 2.10]{D93} gives a current $R'$ such that $\|R'\| = 1 - 2a - b-c$ and

$$4(1-2a-b-c) = \int_{\mathbb{P}^2} R'\wedge([\gamma_1]+[l_1]+[l_2]) \geq$$ 
$$2\nu(R', q_2) + 2\nu(R', q_4) + \nu(R', p_1) + \sum_{i=1}^{5} \nu(R', p_i)$$
$$> 4 \alpha '+ 6\beta ' - 8a -4b - 4c$$
$$\implies 4>4\alpha ' + 6\beta ' = 4$$

\smallskip

\noindent again giving us a contradiction.  If instead  $ m_1(\{q_2, p_3, p_5\}) = 3 = m_1(\{q_2, p_2, p_4\})$, $m_1(\{p_1, p_2, p_5\}) = 2$ and $m_1(\{p_1, p_3, p_4\})=3$ , a similar argument gives us a contradiction.

\textit{Case} 2d:  Finally $ m_1(\{q_2, p_3, p_5\}) = 3 = m_1(\{q_2, p_2, p_4\})$, $m_1(\{p_1, p_2, p_5\}) = 3$ and $m_1(\{p_1, p_3, p_4\})=3$.  Consider the seven points subset $\{q_2, q_4, p_1, p_2, p_3, p_4, p_5\}$, and note that we have $\{q_4, p_2, p_3\}\in L_2$, $\{q_4, p_4, p_5\}\in L_3$, and we also have lines $l_1, l_2, l_3, l_4$ containing $\{q_2, p_3, p_5\}$, $\{q_2, p_2, p_4\}$,  $\{p_1, p_3, p_4\}$, and  $\{p_1, p_2, p_5\}$ respectively.  Note that $m_2(\{q_2, q_4, p_1, p_2, p_3, p_4, p_5\}) = 5$, so we can apply Coman's result \cite[Proposition 2.3]{C06}, so there exists an entire pluricomplex Green function $u$ with $\gamma_u = 4$, and $u$ has weight two logarithmic poles and three of the seven points, and weight one at the remaining four.  First note that we cannot have weight two poles at both $q_2$ and $q_4$, for if we do, then we also have a weight two pole at say $p_1$, and \cite[Proposition 2.1]{C06} gives us that

$$4 = \gamma_u \|T\| \geq  2\nu(T, q_2) + 2\nu(T, q_4) + 2\nu(T,p_1) + \sum_{i=2}^{5} \nu(T, p_i) > 4\alpha + 6\beta =4$$

\smallskip

\noindent a contradiction.  So since $u$ cannot have a double pole at both $q_2$ and $q_4$, at least one of the $l_i$ or $L_i$ will have two points such that $u$ double poles at both points and a third where $u$ has a single pole, say w.l.o.g. we have $l_1$ with this property, where $u$ has double poles at $x_1, x_2 \in l_1$and has a single pole at $x_3\in l_1$.  But now applying \cite[Proposition 2.1]{C06}, we get

$$4 \geq \int_{\mathbb{C}^2} [l_1]\wedge dd^c u \geq  2\nu([l_1], x_1) + 2\nu([l_1], x_2) + \nu([l_1],x_3) =2 + 2 +1 =5$$

\smallskip

 \noindent an obvious contradiction.  However now we have ruled out all of the possible ways in which $p_1$, $p_2$, $p_3$, $p_4$, $p_5$, $q_2$ fall, thus it must be the case that$\|T\|>1$.  \end{proof}

  We now prove the main result.  This is done by proving a few propositions which consider the various cases that can occur depending on how the four points are positioned.  For the remainder of this section, assume that $T$ is a positive closed current of bidimension $(1,1)$ on $\mathbb{P}^2$ with $\|T\|=1$.

\begin{Proposition}\label{P:31} Let  $\{ q_{i} \}_{i=1}^{4}$ be points in $\mathbb{P} ^{2}$ such that they are in general position and $\nu (T, q_{i}) \geq \alpha > 2/5$.  Let $\beta = \frac{2}{3} (1- \alpha)$.  Then there exists a conic $C$ (possibly reducible) such that $| E_{\beta }^+ (T) \backslash C | \leq 1$.
\end{Proposition}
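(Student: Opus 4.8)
The plan is to suppose, for contradiction, that no conic works, i.e. that for every conic $C$ we have $|E_\beta^+(T)\setminus C|\geq 2$, and then to extract enough points of $E_\beta^+(T)$ to build one of the configurations forbidden by Lemmas \ref{L:31}--\ref{L:33}, forcing $\|T\|>1$. First I would invoke Theorem \ref{T:22} (Coman's conic result, applicable since $\alpha>2/5$ hence $\beta=\frac23(1-\alpha)<\frac25$ is not directly what we need, but we instead apply it to $E_\beta^+(T)$ via an auxiliary current): more precisely, since the four points $q_i$ have Lelong number $\geq\alpha$, I expect the argument to produce a conic $C_0$ through all four $q_i$ (four general-position points lie on a pencil of conics, so choose one, or use a reducible one through pairs). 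By the contradiction hypothesis there are at least two points $p_1,p_2\in E_\beta^+(T)$ off $C_0$; the strategy is to keep enlarging: pass to a new conic forced to contain $q_1,\dots,q_4$ together with as many of the $p_j$'s as possible, and harvest new $p_j$'s off of it.

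The key steps, in order: (1) Fix the four $q_i$ in general position. Since five general points determine a conic but four do not, consider the one-parameter family of conics through the $q_i$; a generic member is irreducible, and the three reducible members are the pairs of lines $L_{ij}\cup L_{kl}$ partitioning $\{q_1,\dots,q_4\}$. (2) Using the auxiliary-current trick from Lemma \ref{L:32} (form $R=\lambda([\text{some lines through }q_i])+\mu T$ with $\lambda,\mu$ chosen so $\|R\|=1$ and all relevant Lelong numbers of $R$ exceed $2/5$), apply Theorem \ref{T:22} to get a conic through six of a chosen seven-point set drawn from $\{q_i\}\cup\{p_j\}$. (3) Analyze whether this conic is irreducible or splits into two lines; in the irreducible case aim directly for hypothesis (ii) of Lemma \ref{L:31} (six points $x_1,x_2,x_5,x_6,x_7,x_8$ on an irreducible $\Gamma$ with four of them, suitably chosen among $q$'s and $p$'s, on a line $L_1$), and in the reducible case aim for hypothesis (i) (the two component lines $L_2,L_3$ each carrying three of the points, meeting $L_1$ at distinct points). (4) If at some stage a line $L$ through one, two, or three of the $q_i$ picks up enough $p_j$'s, invoke Lemma \ref{L:32} directly; if three lines threading pairs of $\{q_i\}$ each acquire $p_j$'s, invoke Lemma \ref{L:33}. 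In every branch the relevant lemma yields $\|T\|>1$, contradicting $\|T\|=1$.

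The main obstacle I anticipate is the bookkeeping in step (3)--(4): the contradiction hypothesis only guarantees \emph{two} points of $E_\beta^+(T)$ off any given conic, and I need to show that iterating "choose a good conic, harvest two new $p$'s" cannot go on forever without one of the three lemma-configurations appearing — essentially a finite case analysis on the collinearity pattern ($m_1$ values) of the accumulated $p_j$'s relative to the $q_i$ and to the lines $L_{ij}$. Lemma \ref{L:33} is tailored to the awkward case where the $p_j$'s distribute themselves three-lines-through-pairs-of-$q_i$ rather than sitting on a single conic, so the argument should terminate, but verifying that the auxiliary currents in each sub-case have the right masses and that Theorem \ref{T:22} (or Theorem \ref{T:21} for the line version) applies with the claimed strict inequalities is where the care is needed. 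A secondary subtlety is ensuring, when Theorem \ref{T:22} returns a \emph{reducible} conic, that its two lines meet $L_1$ at distinct points and each carry exactly the three points required by Lemma \ref{L:31}(i) — exactly the type of argument already executed in the proof of Lemma \ref{L:32}, which I would mirror here.
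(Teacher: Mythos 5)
Your overall skeleton does match the paper's: start from a conic determined by the $q_i$ together with a first point of $E_\beta^+(T)$, harvest points of $E_\beta^+(T)$ off it, use the auxiliary-current trick to invoke Theorem \ref{T:22} and obtain a conic through six of a seven-point set, and then run a finite case analysis on the collinearity pattern of the accumulated points, closing branches with Lemmas \ref{L:31}--\ref{L:33}. The genuine gap is that you propose to close \emph{every} branch with one of those three lemmas, and they cannot do that. Each of Lemmas \ref{L:31}, \ref{L:32}, \ref{L:33} requires a line carrying four of the relevant points (or three such lines); none of them applies when the eight accumulated points $S=\{q_1,\dots,q_4\}\cup\{p_1,\dots,p_4\}$ satisfy $m_1(S)\leq 3$, i.e.\ when no four are collinear. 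That configuration is perfectly possible a priori, and it is exactly the situation in which your iteration ``choose a good conic, harvest two new points'' threatens not to terminate. The paper kills it with a tool your proposal never mentions: Proposition \ref{P:22} (Coman's Proposition 2.4.(i)) supplies an entire plurisubharmonic function $u$ with $\gamma_u=3$, locally bounded outside a finite set, with weight-one logarithmic poles at all eight points, and Proposition \ref{P:21} then gives
$$3=\gamma_u\|T\|\geq \sum_{i=1}^{4}\nu(T,q_i)+\sum_{i=1}^{4}\nu(T,p_i)>4\alpha+4\beta=\tfrac{4}{3}\alpha+\tfrac{8}{3}>3,$$
a contradiction. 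This Green-function step is not peripheral: it also closes several of the $m_1(S)=4$ subcases (one deletes a point to obtain a seven-point set $A'$ with $m_1(A')=3$, $m_2(A')=6$, and tests a further point $p_5$ against it), so it is the engine that makes the case analysis terminate. Without adding Propositions \ref{P:21} and \ref{P:22} to your toolkit, the argument cannot be completed.

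A secondary point: the paper does not take an arbitrary member of the pencil of conics through the four $q_i$, but the \emph{unique} conic $\Gamma_1$ through $q_1,\dots,q_4$ and a chosen $p_1\in E_\beta^+(T)$. That uniqueness is used later to argue that the conic $\Gamma_2$ produced by Theorem \ref{T:22} must omit one of $q_1,\dots,q_4,p_1$ rather than $p_2$ or $p_3$, which is what drives the subsequent bookkeeping. You should build the fifth point into the initial conic from the outset.
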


\begin{proof} Let $\{ q_{i} \}_{i=1}^{4}$, be as above and let $p_1 \in E_{\beta}^+ (T)$, $p_1 \neq q_i$ (noting that if no such $p_1$ exists then we are done). Since the ${q_i}$ are in general position, we let $\Gamma _{1} $ be the unique conic defined by the $ q_{i}$ and ${p_1}$.  If $\Gamma_{1}$ satisfies the conclusion, then we are done.  If not then we can find two points, $p_2$ and $p_3$ such that $p_2, p_3 \in E_{\beta}^+ (T)\backslash \Gamma_1$.  Let $\alpha '$ be such that $\alpha > \alpha ' > 2/5$ and $\nu (T, p_{i}) > \frac{2}{3} (1- \alpha ') > \beta$.  If the $p_i$ are in general position, we will let $L_{jk}$ be the line containing $p_j$ and $p_k$.  Define a current R as follows:

$$R = \frac{5\alpha ' - 2}{15\alpha '}\sum_{1\leq j<k\leq 3} [L_{jk}] + \frac{2}{5\alpha '} T $$ 

\smallskip

\noindent and note $\|{R}\| = 1 $.  We have the following inequalities:

$$\nu (R, q_i) > \frac{2}{5\alpha '} \alpha > \frac{2}{5} $$ 

\noindent and

$$ \nu (R, p_{i}) > \frac{10\alpha ' - 4}{15\alpha '} + \frac{4-4\alpha '}{15\alpha '} = \frac{2}{5}. $$

\smallskip

If instead the $p_i$ are all on a line $L$, then we use the current

$$R = \frac{5\alpha ' - 2}{5\alpha '} [L] + \frac{2}{5\alpha '} T$$ 

\smallskip

\noindent and get the same inequalities as above.  In either case, by \cite[Theorem 1.2]{C06}, there is a conic $\Gamma_2$ containing at least six of the $\{ q_{i} \}_{i=1}^{4} \cup \{ p_{i} \}_{i=1}^{3}$.  As $\Gamma_1$ is uniquely defined by the $q_i$ and $p_1$, $\Gamma_2$ must omit one of the seven points, and the point omitted must be one of the $q_i$ or $p_1$, else $\Gamma_1 = \Gamma_2$, which means one or both of $p_2, p_3$ would be on $\Gamma_1$, which is a contradiction.  If $\Gamma_2$ satisfies the conclusion, then we are done.  So suppose $\Gamma_2$ does not satisfy the conclusion of our proposition, and then there is $p_4\in  E_{\beta}^+ (T)\backslash \Gamma_2$.

 We will let $A = \{q_i\}_{i=1}^{4} \cup \{p_i\}_{i=1}^{3} $, and we will note that $|A| = 7$, $m_2 (A) = 6$, $|A\cap \Gamma_2 | = 6$ and $p_4\notin A\cup \Gamma_2$.  We will make use of these observations shortly.  Define $S = A\cup\{p_4\}$.  We now consider the following possibilities for $S$: $m_1(S) \leq 3$, $m_1(S) = 4$, and $m_1(S)\geq 5$.

Suppose $m_1(S)\leq 3$.  Then this means that $m_1(A)\leq 3$ and by the above observations about $A$, we can apply  \cite[Proposition 2.4.(i)]{C06} (see Proposition 2.4), i.e. there exists $u\in PSH(\mathbb{C}^2)$ such that $\gamma_u = 3$, $u$ is locally bounded outside of a finite set, and $u$ has logarithmic poles of weight one at each point in $S$.  Now by \cite[Proposition 2.1]{C06} (see Proposition 2.3), we have that:

$$3 = \gamma_u \|T\| \geq \sum_{i=1}^{4} \nu(T, q_i) + \sum_{i=1}^{4} \nu(T, p_i) > 4\alpha + 4\beta =\frac{4}{3}\alpha + \frac{8}{3} > 3.$$

\smallskip

\noindent This is a contradiction, thus we cannot have $m_1(S) \leq 3$.

Suppose $m_1(S) \geq 5$.  Let $L$ be the line such that $|S\cap L| \geq 5$.  If $L$ contains $\{p_i\}_{i=1}^{4}$ and one of the $q_i$, then $\Gamma_2$ is reducible (as regardless of which point $\Gamma_2$ omits, it still contains at least three points on $L$), and $L$ is a component which implies that $p_4 \in \Gamma_2$, which is impossible.  As the $q_i$ are in general position, $L$ contains three of the $p_i$ and two of the $q_i$.  If $p_1\in L$ then we have $L$ is a component of $\Gamma_1$ and at least one of $p_2$ or $p_3$ is on $L$, which is a component of $\Gamma_1$, and thus impossible as $p_2, p_3 \notin \Gamma_1$.  So $p_1\notin L$, but now $L$ contains $p_4$ and at least three points of $\Gamma_2$, so $L$ is a component of $\Gamma_2$, which means $p_4\in \Gamma_2$, another contradiction.  As the $q_i$ are in general position, this covers all the possible ways that $m_1(S)\geq5$.

So if there is $p_4 \in E_{\beta}^+(T) \backslash \Gamma_2$, it must be the case that $m_1(S) = 4$.  So there is a line $L$ containing exactly four points of $S$.  This decomposes into a few more cases depending on what four points the line $L$ contains.  The first and easiest is if $L$ contains $\{p_i\}_{i=1}^{4}$ (which means that none of the $q_i$ lie on $L$ as $m_1(S) = 4$).  Then consider the current

$$R = \frac{5\alpha ' - 2}{5\alpha '} [L] + \frac{2}{5\alpha '} T .$$ 

\smallskip

 Routine calculations show that $\|R\| = 1$, $\nu(R, p_i) > \frac{2}{5}$, and $\nu(R, q_i) > \frac{2}{5}$, so by  \cite[Proposition 1.2]{C06}, we have that there is a conic containing at least seven points of S, which means $L$ is a component of this conic, which implies that at least three of the $q_i$ are collinear as $L$ cannot contain more than four points, which is a contradiction.

\smallskip

We will now assume that $m_1(A) \leq 3$, and consider the remaining cases.  Then later we will consider them for when $m_1(A) = 4$.

\smallskip

If $L$ contains three $p_i$ and one $q_i$ then note that since $m_1(A)\leq 3$ it must be the case that $p_4\in L$.  Suppose that the four points not on $L$ are not in general position so there is a line, say $L_1$ containing  three of the points not on $L$, and they must be two $q_i$ and one $p_i$ (as the three $q_i$ not on $L$ are in general position), and $L\cap L_1 \cap A = \emptyset $ as $m_1(A) \leq 3$.  Noting that $| \Gamma_2 \cap(L\cup L_1)| \geq 5$, one of $L$ or $L_1$ is a component of $\Gamma_2$ by Bezout's theorem.  As $L$ contains $p_4$, it must be the case that $L_1$ is a component of $\Gamma_2$.   But since $L_1$ contains only three points of $\Gamma_2$, and at least two points of $\Gamma_2$ are on $L$, it must be the case that $\Gamma_2 = L\cup L_1$, but this means $p_4\in \Gamma_2$, which is a contradiction.  So the four points not on $L$ must be in general position.

\smallskip

  Note that since the four points off of $L$ must be in general position, and $L$ contains one of the $q_i$ and three of the $p_i$, we have satisfied all of the hypotheses of Lemma 3.2, and thus $\|T\| \neq 1$, which is a contradiction.

If $L$ contains two $p_i$ and two $q_i$, we let $B$ be the four point set consisting of the two $p_i$ and two $q_i$ not contained on $L$.  Since $m_1(A) \leq 3$, it must be the case that $p_4\in L$, and that $m_1(B) \leq 3$.  If $m_1(B) = 3$ then we can argue as we did above to get that $\Gamma_2$ is reducible, and it contains $L$ as a component, but then $p_4\in \Gamma_2$, which is impossible.  So it must be the case that $m_1(B) =2$ and again we can apply Lemma 3.2 to get a contradiction.  This finishes the case where $L$ contains two $p_i$ and two $q_i$, and also finishes the case $m_1(S) = 4$ when $m_1(A)\leq 3$.

 So far we have shown that if there is in fact a point $p_4\in E_{\beta}^+(T) \backslash \Gamma_2$, then it must be the case that $m_1(S) = 4 = m_1(A)$.  It only remains to consider the cases where $L$ contains one $q_i$ and three $p_i$ or two $q_i$ and two $p_i$.  We will first consider when $L$ contains three $p_i$, and let $B = S\backslash (S\cap L)$, noting that $m_1(B) < 4$ as the $q_i$ are in general position.  If $m_1(B)=2$ then by Lemma 3.2, $\|T\| > 1$, a contradiction.

 Thus $m_1(B) = 3$ and  then $m_2(S) = 7$.  After reindexing (if necessary) say that $p_1\in L$.  Let $C = L\cup L_1$ where $L_1$ contains the three collinear points in $B$ (noting that $L_1$ contains two $q_i$ and one $p_i$, and say $q_4$ is the point of$B$ not on $L_1$).  We will show that $C$ is the desired conic satisfying the conclusion of the proposition.  If not, assume for contradiction there exists $p_5 \in E_{\beta}^+ (T)\backslash C$.  If $L\cap L_1\cap S = \emptyset $, then set $A' = S\backslash \{p_1\}$ and $S' = A'\cup \{p_5\}$.  So note that $|A'| = 7$, $m_1(A') = 3$, $m_2(A') = 6$ (since if $m_2(A')=7$, either all four points in $B$ are collinear or one point of $B$ is on $L$ and neither of those can happen), $|A'\cap C | = 6$, $p_5\notin A'\cup C$, and $m_1(S') \leq 4$.  If $m_1(S') = 3$ then we can use \cite[Propositions 2.1 and 2.4.(i)]{C06} as before to get a contradiction.  If $m_1(S') = 4$ then since $p_5\notin C$, there is a line $L_2$ containing $p_5$ and three other points from $A'$.  By construction, $L_2$ must contain $q_4$ as well as one point of $L\cap S'$ and one point of $L_1 \cap S'$. However, $L_2$ contains at least one $q_i$ and $m_1(S' \backslash L_2) = 2$ so we can apply lemma 3.2 and thus $\|T\|>1$.  If $L\cap L_1\cap S \neq \emptyset$ then the intersection must be one of the points  contained on $L$, since otherwise if the intersection was a point on $L_1$, then $|L\cap S| = 5$, a contradiction.  Further, it must be one of the $p_i$,w.l.o.g. say $p_i =p_2$, as the $q_i$ are in general position.  We set $A' = S\backslash \{p_2\}$, and argue the same way to get a contradiction.  We have shown that if $m_1(A) = m_1(S) = 4$ and there is a line containing three of the $p_i$ and one $q_i$, then there can be no such $p_5$ and $C$ is the desired conic that satisfies the conclusion.

Finally we consider when $L$ contains two $p_i$, two $q_i$, and $m_1(A) = 4$.  Again we let $B = S\backslash S\cap L$ and note that $m_1(B)\neq4$ or else we get that $\Gamma_1 = \Gamma_2$.  Furthermore, if $m_1(B) = 2$, we can apply Lemma 3.2 to get a contradiction.  Our only remaining consideration is when $m_1(B) =3$.  Let $L_1$ be the line containing three points from $B$.  We will re-index our points so that $\{q_1, q_2, p_1, p_4\} \in L$ and $B = \{q_3, q_4, p_2, p_3\}$.

Let $C=L \cup L_1$, and again we will show this is the desired conic.  Suppose for contradiction that $p_5\in E_{\beta}^+(T)\backslash C$.  Assume $L\cap L_1 \cap S= \emptyset$.  Let $A' = S\backslash \{p_1\}$ (recalling $p_1 \in L$), $S' = A\cup \{p_5\}$, and note that $|A'| = 7$, $m_1(A') = 3$, $m_2(A') = 6$, $|A'\cap C | = 6$, $p_5\notin A'\cup C$, and $m_1(S') \leq 4$.  If $m_1(S') = 3$ then we can use \cite[Propositions 2.1 and 2.4.(i)]{C06} as before to get a contradiction.  If $m_1(S') = 4$ then since $p_5\notin C$, there is a line containing $p_5$ and three other points from $A'$, but now we argue as before using lemma 3.2 to reach a contradiction.  If instead $L \cap L_1 = \{p_i\}$, then note it must be some $p_i\in L$ (otherwise $m_1(S) >4$),  we set $A' = S\backslash \{p_i\}$ and the same argument shows that $C$ is the desired conic.

Suppose $L\cap L_1 = \{q_i\}$, and w.l.o.g. say that point is $q_i = q_1$.  Then $C = L\cup L_1$ omits $q_k\in B$,  (as the $q_i$ are in general position), say that omitted point is $q_4$.  We will let $L_2$ be the line that contains $q_4$ and $p_5$.  If $L_2\cap C \cap S = \emptyset$, then we can set $B' = \{q_3, q_4, p_3, p_5\}$, note that $m_1(B') = 2$, and apply Lemma 3.2 using $L$ and $B'$ to get a contradiction.  If $L_2$ hits exactly one point on $L\cap S'$ and no points on $L_1\cap S'$, then again we can let $B' = \{q_3, q_4, p_3, p_5\}$ and again use Lemma 3.2.  If $L_2$ hits exactly one point on $L_1\cap S'$ and no points on $L\cap S'$, then we can let $B' = \{q_2, q_4, p_1, p_5\}$ and again use Lemma 3.2.  If $L_2$ hits two points on $C\cap S'$, then note at least one of those two points must be a $p_i$ (as the $q_i$ are in general position) w.l.o.g. say it is $p_1$ on $L$, and we can set $B' = \{q_2, q_4, p_4, p_5\}$, again $m_1(B') = 2$.  Now using $L_1$, which contains $\{q_1, q_3, p_2, p_3\}$ (i.e. two $q_i$ and two $p_i$) and $B'$, we argue as before using Lemma 3.2 to get a contradiction.  This resolves the case of $L$ containing two $p_i$ and two $q_i$, the case of $m_1(A) = m_1(S) = 4$, and thus we have finished the proof.

\end{proof}

\begin{Proposition}\label{P:32}  Let $\{ q_{i} \}_{i=1}^{4}$ be points in $\mathbb{P} ^{2}$ such that $q_1, q_2, q_3$ lie on a line $L_1$ and $q_4$ does not fall on $L_1$.  In addition, $\nu (T, q_{i}) \geq \alpha > 2/5$.  Let $\beta = \frac{2}{3} (1- \alpha)$.  Then there exists a conic $C$ (possibly reducible) such that $| E_{\beta }^+ (T) \backslash C | \leq 1$.
\end{Proposition}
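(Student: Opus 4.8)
The plan is to follow the scheme of the proof of Proposition~\ref{P:31}, using the collinear triple $q_1,q_2,q_3$ where that proof used ``five points in general position'' and invoking Lemma~\ref{L:33} for the mass estimates special to this configuration. If $|E_\beta^+(T)\setminus L_1|\le 1$, then $C=L_1\cup L$ works for any line $L$, so assume there is $p_1\in E_\beta^+(T)$ with $p_1\notin L_1$. A conic meeting $L_1$ in $q_1,q_2,q_3$ must contain $L_1$ as a component, so the unique conic through $q_1,q_2,q_3,q_4,p_1$ is the reducible conic $\Gamma_1:=L_1\cup L'$ with $L'$ the line through $q_4$ and $p_1$; this replaces the conic $\Gamma_1$ of Proposition~\ref{P:31}. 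If $\Gamma_1$ works we are done; otherwise there are two points of $E_\beta^+(T)\setminus\Gamma_1$, relabelled $p_2,p_3$, and exactly as in Proposition~\ref{P:31} we pick $\alpha'\in(2/5,\alpha)$ with $\nu(T,p_i)>\tfrac23(1-\alpha')$ for $i=1,2,3$ and form the unit-mass current $R=\tfrac{5\alpha'-2}{15\alpha'}\sum_{1\le j<k\le 3}[L_{jk}]+\tfrac{2}{5\alpha'}T$ (or $\tfrac{5\alpha'-2}{5\alpha'}[L]+\tfrac{2}{5\alpha'}T$ if $p_1,p_2,p_3$ lie on a line $L$), which has $\nu(R,\cdot)>2/5$ at each of $q_1,\dots,q_4,p_1,p_2,p_3$. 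By Theorem~\ref{T:22} some conic $\Gamma_2$ contains at least six of these seven points, and since $\Gamma_1$ is the unique conic through $q_1,q_2,q_3,q_4,p_1$ while $p_2,p_3\notin\Gamma_1$, the conic $\Gamma_2$ omits exactly one of $q_1,q_2,q_3,q_4,p_1$; in particular $|A\cap\Gamma_2|=6$ for $A:=\{q_1,\dots,q_4,p_1,p_2,p_3\}$. If $\Gamma_2$ works we are done; otherwise fix $p_4\in E_\beta^+(T)\setminus\Gamma_2$, so that $p_4\notin A\cup\Gamma_2$, and set $S:=A\cup\{p_4\}$.

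We split on $m_1(S)$ as in Proposition~\ref{P:31}. If $m_1(S)\le 3$, then (since $L_1$ carries $q_1,q_2,q_3$) $m_1(A)=3$ and $m_2(A)=6$, so Proposition~\ref{P:22} gives $u\in PSH(\mathbb{C}^2)$ with $\gamma_u=3$, locally bounded off a finite set, with weight-one poles at all eight points $q_1,\dots,q_4,p_1,\dots,p_4$, and Proposition~\ref{P:21} yields $3=\gamma_u\|T\|\ge\sum_{i=1}^4\nu(T,q_i)+\sum_{i=1}^4\nu(T,p_i)>4\alpha+4\beta=\tfrac43\alpha+\tfrac83>3$, a contradiction. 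If $m_1(S)\ge 5$, a line carrying five of the eight points, combined with the uniqueness of $\Gamma_1$ and the fact that $\Gamma_2$ avoids $p_4$, forces $L_1$ or the second component of a reducible $\Gamma_2$ to contain a point it cannot, a contradiction. In the remaining case $m_1(S)=4$ one fixes a line $L$ with $|S\cap L|=4$ and argues by subcases on which of $q_1,\dots,q_4,p_1,\dots,p_4$ lie on $L$: whenever the four points off $L$ form one of the configurations of Lemma~\ref{L:32} we get $\|T\|>1$, a contradiction; otherwise the candidate conic is $C:=L\cup\ell$ with $\ell$ the line through the three collinear stray points, and if $C$ fails to satisfy the conclusion one produces $p_5\in E_\beta^+(T)\setminus C$ and then, now having five points of $E_\beta^+(T)$ available, uses either a Proposition~\ref{P:22}/Proposition~\ref{P:21}-type estimate or the three-line configuration of Lemma~\ref{L:33} --- a line through $q_1,q_2,q_3$ and one $p_j$, a line through $q_1,q_4$ and two further points of $E_\beta^+(T)$, and a line through $q_3,q_4$ and two further points of $E_\beta^+(T)$ --- to again force $\|T\|>1$; when no such contradiction arises, $C$ itself is the desired conic.

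I expect the bookkeeping in the case $m_1(S)=4$ (and its continuation $m_1(A)=4$) to be the main obstacle: one must enumerate every way a line can contain four of the eight, later nine, marked points and, for each, check that either Lemma~\ref{L:32} applies directly or the collinear triple on $L_1$ can be combined with two lines through $q_4$ --- one through $q_1$, one through $q_3$ --- to realize the hypotheses of Lemma~\ref{L:33}. The delicate subcases are those in which one of the two lines through $q_4$ fails to pick up two extra points of $E_\beta^+(T)$; there one must fall back on Lemma~\ref{L:32} or on the pluricomplex Green function estimate of Proposition~\ref{P:21}.
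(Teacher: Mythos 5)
Your proposal reproduces the architecture of the paper's proof faithfully: the reducible conic $\Gamma_1 = L_1 \cup L'$ through $q_4$ and $p_1$, the auxiliary current $R$ built from the lines $L_{jk}$ to invoke Theorem~\ref{T:22} and produce $\Gamma_2$, the trichotomy on $m_1(S)$, the Green-function contradiction via Propositions~\ref{P:21} and~\ref{P:22} when $m_1(S)\le 3$, and the reliance on Lemmas~\ref{L:32} and~\ref{L:33} in the remaining configurations. That much is correct and is exactly the paper's route. However, the proof is not complete, and the missing part is not peripheral: the entire case $m_1(S)=4$ --- which is the bulk of the actual argument --- is left as a description of what \emph{should} happen (``whenever the four points off $L$ form one of the configurations of Lemma~\ref{L:32} we get $\|T\|>1$; \ldots when no such contradiction arises, $C$ itself is the desired conic''). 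The content of the proposition is precisely the verification that every one of these configurations either triggers Lemma~\ref{L:32}/\ref{L:33} or yields a conic $C=L\cup\ell$ that provably omits at most one point; that verification requires genuine geometric arguments in each subcase (uniqueness of the conic through five of the marked points, Bezout-type arguments forcing a line to be a component of $\Gamma_2$ and hence $p_4\in\Gamma_2$, and a careful choice of which four points off which line to feed into Lemma~\ref{L:32}). Announcing the template and deferring the ``bookkeeping'' does not establish the result, and you yourself flag this as the unresolved obstacle.

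Two more specific gaps. First, you allow $p_1,p_2,p_3$ to be collinear and patch the mass estimate with the current $\tfrac{5\alpha'-2}{5\alpha'}[L]+\tfrac{2}{5\alpha'}T$; but the later case analysis needs $m_1(\{p_1,p_2,p_3\})=2$ (for instance to rule out a line containing all four $p_i$, and to force $p_4\in L$ in several subcases). The paper performs an explicit preliminary reduction --- if the three $p_i$ lie on a line $l_2$, test the conic $L_1\cup l_2$, extract a further point, and reindex to obtain a non-collinear triple --- and without this step your subsequent subdivision is incomplete. Second, your $m_1(S)\ge 5$ case is asserted rather than argued: the actual proof first observes $m_1(A)\le 4$ by construction, so $m_1(A)=4$ with the four-point line necessarily of the form $\{q_4,p_2,p_3,q_i\}$, and only then concludes that the five-point line is a component of $\Gamma_2$ regardless of which point $\Gamma_2$ omits, forcing $p_4\in\Gamma_2$. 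These steps need to be written out for the proof to stand.
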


\begin{proof} Let $\{ q_{i} \}_{i=1}^{4}$, be as described in the assumptions, and let $p_1 \in E_{\beta}^+ (T)\backslash L_1$, with $p_1 \neq q_4$ (noting that if no such $p_1$ exists then we are done).  We will let $l_1$ be the line that connects $p_1$ and $q_4$ and let $\Gamma_1 = L_1 \cup l_1$.  Now there exist points $p_2, p_3\in E_{\beta}^+ (T) \backslash \Gamma_1$, else we are done.  Before moving on, we will show that we can assume that $m_1(\{p_1, p_2, p_3\}) =2$.  For suppose that all three $p_i$ lie on a line, say $l_2$, then $L_1\cup l_2$ gives us a conic containing six of the seven points.  Then there is a $p_4 \in E_{\beta}^+ (T) \backslash (L_1\cup l_2)$.  If $p_4 \notin l_1$ then note $\{p_1, p_2, p_4\}$ are in general position.  If $p_4\in l_1$, then note $\{p_2, p_3, p_4\}$ are in general position.  Either way, we will reindex the set and call the points $\{p_1, p_2, p_3\}$ where $p_1$ is the point on $\Gamma_1$.  Let $\alpha '$ be such that $\alpha > \alpha ' > 2/5$ and $\nu (T, p_{i}) > \frac{2}{3} (1- \alpha ') > \beta$ and let $L_{jk}$ be the containing $p_j$ and $p_k$.  Define a current R as follows:

$$R = \frac{5\alpha ' - 2}{15\alpha '}\sum_{1\leq j<k\leq 3} [L_{jk}] + \frac{2}{5\alpha '} T $$ 

\smallskip

\noindent and note $\|{R}\| = 1 $.  We have the following inequalities:

$$\nu (R, q_i) > \frac{2}{5\alpha '} \alpha > \frac{2}{5} $$

\noindent and

$$ \nu (R, p_{i}) > \frac{10\alpha ' - 4}{15\alpha '} + \frac{4-4\alpha '}{15\alpha '} = \frac{2}{5}. $$

\smallskip

Thus by \cite[Theorem 1.2]{C06} (see Theorem 2.2), there is a conic $\Gamma_2$ containing at least six of the $\{ q_{i} \}_{i=1}^{4} \cup \{ p_{i} \}_{i=1}^{3}$.  As $\Gamma_1$ is uniquely defined by the $q_i$ and $p_1$, $\Gamma_2$ must omit one of the seven points, and the point omitted must be one of the $q_i$ or $p_1$, else $\Gamma_1 = \Gamma_2$, which means one or both of $p_2, p_3$ would be on $\Gamma_1$, which is a contradiction. If $\Gamma_2$ satisfies the conclusion, then we are done.  So suppose $\Gamma_2$ does not satisfy the conclusion of our proposition, and then there is $p_4\in  E_{\beta}^+ (T)\backslash \Gamma_2$.

 We will let $A = \{q_i\}_{i=1}^{4} \cup \{p_i\}_{i=1}^{3} $, and we will note that $|A| = 7$, $m_2 (A) = 6$, $|A\cap \Gamma_2 | = 6$ and $p_4\notin A\cup \Gamma_2$.  We will make use of these observations shortly.  Define $S = A\cup\{p_4\}$.  We now consider the following possibilities for $S$: $m_1(S) \leq 3$, $m_1(S) = 4$, and $m_1(S)\geq 5$.

 Suppose $m_1(S)\leq 3$.  Then this means that $m_1(A)\leq 3$ and so we can apply \cite[Proposition 2.4.(i)]{C06}, i.e. there exists $u\in PSH(\mathbb{C}^2)$ such that $\gamma_u = 3$, $u$ is locally bounded outside of a finite set, and $u$ has logarithmic poles of weight one at each point in $S$.  Now by \cite[Proposition 2.1]{C06}, we have that:

$$3 = \gamma_u \|T\| \geq \sum \nu(T, q_i) + \sum \nu(T, p_i) > 4\alpha + 4\beta =\frac{4}{3}\alpha + \frac{8}{3} > 3.$$

\smallskip

\noindent This is a contradiction, thus $m_1(S) > 3$.

Suppose $m_1(S) \geq 5$.  Note that by how the points in $A$ are constructed, it is the case that $m_1(A)\leq 4$, and since $m_1(S)\geq 5$, this means $m_1(A) = 4$, and as the $p_i$ are in general position, the only way that $m_1(A) = 4$ is if there is a line containing $\{q_4, p_2, p_3, q_i\}$ for some $i=1,2,3$.  Then there is a line $L$ containing at least five points, and it must be the previously mentioned line with $p_4$ on it as well.  However, regardless of what point is omitted from $\Gamma_2$, $L$ is a component of $\Gamma_2$ which means $p_4\in \Gamma_2$, which is a contradiction.  Thus $m_1(S) < 5$.

It must be the case that $m_1(S) = 4$, and now we begin our battle with this situation.  As before, we will note that this breaks into cases depending on what points lie on the the line that contains four points.  As the $p_1, p_2,$ and $p_3$ are not collinear, we cannot have all four $p_i$ on a line, so that removed that case instantly.

\textit{Case} 1:  Suppose $L$ contains three $p_i$ and one $q_i$.  Suppose that $q_i = q_4$.  If $p_1 \in L$, the conic $\Gamma_3 := L\cup L_1 = \Gamma_1$, which is impossible as one of the other two $p_i$ on $L$ will be either $p_2$ or $p_3$, and $p_2, p_3 \notin \Gamma_1$.  So it must be that the $p_i$ are $p_2, p_3,$ and $p_4$.  Note $| \Gamma_3\cap \Gamma_2 | \geq 5$, and that any subset of five points from $\{q_1, q_2, q_3, q_4, p_2, p_3\}$ uniquely defines $\Gamma_3$ so it must be the case that $\Gamma_2 = \Gamma_3$, which means $p_4 \in \Gamma_2$, which is a contradiction.  Thus $q_i \neq q_4$.

 So $L$ contains a $q_i \neq q_4$, say $L$ contains $q_1$ (reindexing if necessary).  Once again note that $p_4$ must be one of the points on $L$ as otherwise we would have $p_1, p_2, p_3$ collinear.  Let $B = \{ q_2, q_3, q_4, p_i\}$ be the four points off $L$.  If $m_1(B) = 2$, then we are done as Lemma 3.2 gives us a contradiction.  So it must be the case that $m_1(B) \geq 3$, and as $q_4 \notin L_1$, we have $m_1(B)=3$.  Since $p_i\notin L_1$ (because $p_i \neq p_4$), we have a line, $L_2$,that contains $\{p_i, q_4, q_i\}$  (w.l.o.g. say $q_2$). Let $C := L\cup L_2$, we will show $C$ is the desired conic.  For contradiction suppose there is $p_5\in E_{\beta}^+ (T)\backslash C$.  Note that if $L_2 \cap L \cap A = \emptyset$, $C$ is uniquely determined by any five points of $\{q_1. q_2, q_4, p_1, p_2, p_3\}$.  Also note that $| \Gamma_2\cap C| \geq 5$, so again we can argue that $\Gamma_2 = C$, but again this means $p_4 \in \Gamma_2$, a contradiction.  If instead $L_2 \cap L \cap A = \{p_2\}$ (reindex if necessary), then we consider the set $A' = S\backslash \{p_2\}$ and $S' = A'\cup \{p_5\}$.  Note $|A'|=7$, $m_1(A') = 3$, $m_2(A') = 6$, $|A'\cap C | = 6$, and $p_5\notin A'\cup C$.  Let $L_3$ be the line containing $p_5$ and $q_3$.  If $L_3 = L_1$, i.e. $p_5\in L_1$, then note the line $L_1$ and $\{p_1,p_3,p_4,q_4\}$ satisfy the assumptions of Lemma 3.2, giving us a contradiction.  If $p_5\notin L_1$ and $|L_3 \cap C \cap S'| \leq 1$ then $m_1(S')\leq 3$ and we can argue using \cite[Propositions 2.1 and 2.4.(i)]{C06} to get a contradiction.  Finally if  $|L_3 \cap C\cap S'| = 2$, then $L_3$ contains one point of $L_2\cap S'$ and one point of $L\cap S'$.  But now note that $m_1(S' \backslash (S'\cap L_3)) = 2$, so those four points and $L_3$ satisfy the assumptions of Lemma 3.2, and again we get a contradiction.

\textit{Case} 2:  Now suppose $L$ contains three $q_i$ and one $p_i$.  Actually it must be the case that $L = L_1$ and $p_4\in L_1$, as no other $p_i$ can be on $L_1$.  If $m_2(S) = 6$ then the four points not on $L$ are in general position, and thus by Lemma 3.2, we have a contradiction.  Since $m_2(A) = 6$, $m_2(S)\leq 7$, so it must be the case that $m_2(S) = 7$.  Let $B$ be the set containing the four points not on $L$, and it must be that $m_1(B) =3$ (else $m_2(S)\neq 7$).  Since $m_1(B) = 3$, $p_1, p_2, p_3$ cannot be collinear, and $p_2, p_3 \notin \Gamma_1$, there is a line, say $L_2$ containing $\{p_2, p_3, q_4\}$. However it now follows that $m_1(A) = 4$ since if $m_1(A) =3$, then we would get that $\Gamma_2 = L_2 \cup L$ which means $p_4\in \Gamma_2$, a contradiction.  So there is a line containing $p_2, p_3, q_4$ and one of the $q_i$ on $L$ (as this is the only way we can have $m_1(A) = 4$), and that line is in fact $L_2$.  Let $C = L\cup L_2 $, and note there must be a $p_5\in E_{\beta}^+ (T)\backslash C$, otherwise we are done.  Let $L_3$ be the line containing $p_1,p_5$.  If $L_3 \cap C\cap S = \emptyset$, then note $m_1(\{p_1,p_2,p_5, q_4\}) = 2$, so those four points and the line $L$ satisfy the hypotheses of Lemma 3.2.  If $L_3 \cap C\cap S = \{p_i\}$, then we can assume w.l.o.g. that $p_i$ is $p_2$ on $L_2$, and now the points  $p_1, p_3, p_5, q_4$ are in general position and none of the fall on $L$, so again we can apply Lemma 3.2 to get a contradiction.  If instead  $L_3 \cap C\cap S = \{q_i\}$, say $q_1$ on $L$, then note $p_1, p_2, p_5, q_4$ are in general position and off $L$, so again we can use 3.2.  A similar argument holds if $q_i$ falls instead on $L_2$ or on the intersection $L\cap L_2$.  If  $|L_3 \cap C\cap S|=2$ and at least one of the two points is a $p_i$, we can argue as we did above.  If both points are $q_i$, one must be $q_4$ on $L_2$ and say the other is $q_1$ on $L$, however this is the same configuration that we resolved in Lemma \ref{L:33}, and thus this situation cannot happen either.  We have have proven that there cannot exist a point $p_5$, and thus $C$ is the desired conic, resolving the case when our line $L$ contains three $q_i$ and one $p_i$.

\textit{Case} 3:  We now move on to our last situation, that the line $L$ contains two $p_i$ and two $q_i$.  As $m_1(S) = 4$, one of the $q_i$ is $q_4$, and the other is one of the three $q_i$ on $L_1$, w.l.o.g., say $q_1$, and say the other points are $p_2$ and $p_3$.  Let $B$ once again be the four points off of $L$, so $B = \{q_2, q_3, p_1, p_4\}$ and either $m_1(B) = 2$ or $m_1(B)=3$ (if $m_1(B) = 4$, this means thats $\Gamma_1 = \Gamma_2$, which is impossible).  If $m_1(B) = 2$, then by Lemma 3.2, we have a contradiction.  If $m_1(B) = 3$, and we have one of the $p_i$ on $L_1$ and we are back in case two as now $L_1$ contains three $q_i$ and one $p_i$, which we have already argued.  So let $L_2$ be the line containing three points of $B$ and note that it must be both $p_i$ and one of the $q_i$ on $L_1$, say $q_2$.  Let $C := L\cup L_2$, and we will show that $C$ s the desired conic.  Suppose for contradiction that there is $p_5\in E_{\beta}^+(T)\backslash C$.  If $p_5\in L_1$, and if $L\cap L_2 \cap S=\emptyset$ then note we can use Lemma 3.2 with $p_1, p_2, p_4, q_4$ as they are in general position, and $L_1$, giving a contradiction.  If $p_5\in L_1$, and if $L\cap L_2 \cap S=\{p_i\}$, then we  can use Lemma 3.2 again, but using the four point off of $L_1$ that omits $\{p_i\}$.  If $p_5\in L_1$, and if $L\cap L_2 \cap S=\{q_i\}$, then we we can apply Lemma \ref{L:33} to get a contradiction.  Thus $p_5\notin L_1$, and then let $L_3$ be the line containing $p_5$ and $q_3$.  If $L_2\cap L_3 \cap B\neq \emptyset$ then it must that the intersection is one of the $p_i$, say $p_1$, for if the intersection is $q_2$, then that forces $p_5\in L_1$.  But now note that $m_1(\{q_2, q_3, p_4, p_5\}) = 2$, and all of the points are off $L$, so we can apply Lemma 3.2, and get a contradiction.  If $L_2\cap L_3 \cap B= \emptyset$, then the same argument holds.  Since $p_5$ can neither be on $L_1$ or off $L_1$, no such point can exist, and thus $C$ is the desired conic that satisfies the conclusion.  This resolves the third case, which finishes the $m_1(S) = 4$ case, and thus, the proof.
\end{proof}

\begin{Proposition}\label{P:33}  Let $\{ q_{i} \}_{i=1}^{4}$ be points in $\mathbb{P} ^{2}$ such that all four points are collinear and $\nu (T, q_{i}) \geq \alpha > 2/5$.  Let $\beta = \frac{2}{3} (1- \alpha)$.  Then there exists a conic $C$ such that $| E_{\beta }^+ (T) \backslash C | \leq 1$.
\end{Proposition}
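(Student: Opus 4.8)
\emph{Proof proposal for Proposition \ref{P:33}.}
The plan is to exploit the fact that when all four points $q_i$ lie on a single line $L_0$, the generic Lelong number $a$ of $T$ along $L_0$ is forced to be large; after subtracting $a[L_0]$ from $T$, the remaining current has mass one and all of its exceptional points off $L_0$ carry Lelong number $>\tfrac12$, so that Coman's line theorem (Theorem \ref{T:21}) finishes the argument and produces a conic of the form $L_0\cup L'$.

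Concretely, I would take the Siu decomposition \cite{Siu74} $T=a[L_0]+R$, where $R$ is positive closed of bidegree $(1,1)$ with generic Lelong number $0$ along $L_0$ and $a\in[0,1]$; if $a=1$ then $T=[L_0]$ and the statement is trivial, so assume $a<1$. The key step is to prove
$$a\ \geq\ \frac{4\alpha-1}{3},$$
using the method of Lemma \ref{L:31}. Fix any $\alpha'\in(2/5,\alpha)$, so that $\nu(R,q_i)=\nu(T,q_i)-a>\alpha'-a$ for $i=1,2,3,4$. If $\alpha'\leq a$ then trivially $a\geq\alpha'>\tfrac{4\alpha'-1}{3}$; otherwise Proposition \ref{P:25} applied to $R$ with thresholds $\alpha'-a>0$ at the $q_i$ yields $R'$ with $\|R'\|=1-a$, $\nu(R',q_i)>\alpha'-a$, and $R'$ smooth at generic points of $L_0$, so that $R'\wedge[L_0]$ is a well-defined positive measure by \cite[Corollary 2.10]{D93}. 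Then $\int_{\mathbb P^2}R'\wedge[L_0]=1-a$ by \cite[Theorem 4.4]{FS95}, while the comparison theorem \cite[Corollary 5.10]{D93} gives $\int_{\mathbb P^2}R'\wedge[L_0]\geq\sum_{i=1}^4\nu(R',q_i)>4(\alpha'-a)$; hence $1-a>4\alpha'-4a$, i.e. $a>\tfrac{4\alpha'-1}{3}$. Letting $\alpha'\uparrow\alpha$ gives the estimate.

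With this in hand, set $S:=R/(1-a)$, a positive closed current of bidimension $(1,1)$ with $\|S\|=1$. For any $p\in E_\beta^+(T)\setminus L_0$ we have $\nu(S,p)=\nu(T,p)/(1-a)>\beta/(1-a)$, and since $1-a\leq\tfrac43(1-\alpha)$ and $\beta=\tfrac23(1-\alpha)$ this is $\geq\tfrac{3\beta}{4(1-\alpha)}=\tfrac12$; thus $E_\beta^+(T)\setminus L_0\subseteq E_{1/2}^+(S)$. Applying Theorem \ref{T:21} to $S$ produces a line $L'$ with $|E_{1/2}^+(S)\setminus L'|\leq1$, and therefore $C:=L_0\cup L'$ is a (reducible) conic with $|E_\beta^+(T)\setminus C|=|(E_\beta^+(T)\setminus L_0)\setminus L'|\leq1$, as desired. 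Note that no choice of finitely many auxiliary points is needed here: the estimate $a\geq\tfrac{4\alpha-1}{3}$ is uniform, so the inclusion $E_\beta^+(T)\setminus L_0\subseteq E_{1/2}^+(S)$ is genuinely set-theoretic.

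The only substantive point is the lower bound on the generic Lelong number $a$ along $L_0$; this is where Proposition \ref{P:25} together with the intersection and comparison theorems is used, and it is essentially the inequality already extracted partway through the proof of Lemma \ref{L:31}. Everything after that is an immediate application of Theorem \ref{T:21}, so this proposition is considerably shorter than Propositions \ref{P:31} and \ref{P:32}; the only thing requiring a moment's care is the degenerate possibilities $a\geq\alpha'$ and $a=1$.
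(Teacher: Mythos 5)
Your proof is correct, and it rests on exactly the same three ingredients as the paper's: the Siu decomposition $T=a[L_0]+R$, the lower bound $a\geq\frac{4\alpha-1}{3}$ obtained from Proposition \ref{P:25} together with the intersection and comparison theorems applied to $R'\wedge[L_0]$ at the four points $q_i$, and Coman's line theorem (Theorem \ref{T:21}) applied to the renormalized current $S=R/(1-a)$. Where you diverge is in how the last ingredient is used. The paper first spends a page constructing four points $p_1,\dots,p_4\in E_\beta^+(T)\setminus L_0$ in general position (running through the candidate conics $L_0\cup L_{12}$, $L_0\cup L_{34}$, etc., and reindexing), chooses $\alpha'$ adapted to those four points, and then invokes Theorem \ref{T:21} negatively: $\nu(S,p_i)>\frac12$ forces $m_1(\{p_1,\dots,p_4\})\geq 3$, contradicting general position. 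You instead push $\alpha'\uparrow\alpha$ to make the bound on $a$ uniform, observe that the strict inequality $\nu(T,p)>\beta$ then gives the set-theoretic inclusion $E_\beta^+(T)\setminus L_0\subseteq E_{1/2}^+(S)$, and apply Theorem \ref{T:21} positively to extract the second line $L'$ of the conic $C=L_0\cup L'$ directly. This eliminates the preliminary case analysis entirely and is a genuine streamlining; you also handle the degenerate cases $a\geq\alpha'$ and $a=1$ that the paper passes over silently. The only cost is that your argument is nonconstructive about which line $L'$ appears (it comes out of Theorem \ref{T:21} rather than from named points), but that is immaterial to the statement.
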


\begin{proof} Let $L$ be the line containing the $q_i$, and suppose $| E_{\beta}^+ (T) \backslash L | > 1 $, (otherwise we are done), so there exist points $p_1, p_2 \in E_{\beta}^+ (T)$ not on $L$, and let $L_{12}$ be the line they lie on.  We want to generate four points of $E_{\beta}^+(T)$ that do not lie on $L$ such that no three are collinear.  If the conic $L\cup L_{12}$ does not satisfy the conclusion then we can find two more point $p_3, p_4 \in E_{\beta}^+(T) $ that do not lie on our conic, and let $L_{34}$ be the line containing these new points.  If the four $p_i$ are in general position then we are done, otherwise $L_{34}$ contains three of the $p_i$, after reindexing, say it contains $p_1, p_3, p_4$.  If the conic $L \cup L_{34}$ does not satisfy the conclusion then we can find a point $p_5 \in E_{\beta}^+(T)$ that is not on the new conic.  If $p_5$ does not fall on $L_{2k}$ for $k=3,4$, then take $p_2, p_3, p_4,p_5$ as our four points in general position.  If $p_5$ falls on $L_{2k}$, say w.l.o.g. $L_{23}$, then we take $p_1, p_2, p_4, p_5$ as our four points in general position.  We will reindex to the points to be $p_1, p_2, p_3, p_4$.

By Siu's decomposition theorem \cite{Siu74} we have that 

$$T = a [L ] + R,$$
\smallskip

\noindent where $a$ is the generic Lelong number of $T$ along $L$.  Note that $\| R \| = 1 - a $ and $\nu (R, q_i) \geq \alpha - a$.  Let $\alpha ' \in (\frac{2}{5},  \alpha )$ be such that $\nu (T, p_i) = \nu (R,p_i) > \frac{2}{3}(1-\alpha ') > \beta $ for $i=1,2,3,4$.   Proposition 2.5 shows that there exists a current $R'$ such that $ \|R'\| = 1 - a $, $R'$ is smooth where $R$ has Lelong number $0$, and $\nu (R', q_i) > \alpha ' - a $.  By \cite[Corollary 2.10]{D93}, $R' \wedge [L] $ is a well defined measure.  Now we have

$$ 1-a = \int_{\mathbb{P}^{2 }} R' \wedge [L] \geq \sum_{i=1}^{4} \nu (R'\wedge [L] , q_i ) \geq \sum_{i=1}^{4} \nu (R',q_i) \nu ([L], q_i) >4\alpha ' - 4 a, $$

\smallskip

\noindent where the second inequality follows from \cite[Corollary 5.10]{D93} and the final inequality follows as $\nu([L], q_i) = 1$.  So we have that $a > \frac{4\alpha ' - 1}{3}$.

 Define a new current:

$$S = \frac{R}{1- a} $$

\smallskip

 \noindent and note $\| S \| = 1$.  Now we have:

$$\nu (S, p_i) > \frac{2}{3} \frac{1-\alpha'}{1-a} > \frac{2 - 2\alpha'}{4-4\alpha'} = \frac{1}{2} ,\; i=1,2,3,4. $$

\smallskip

Coman's result, \cite[Theorem 1.1]{C06} shows that $m_1(\{p_1, p_2, p_3, p_4\}) \geq 3$ which implies that at least three of the $p_i$ are collinear which is a contradiction as we constructed them to be in general position. 
\end{proof}

Theorem 1.1 now follows by combining the previous three propositions.

\vspace{10pt}

The following examples will show the necessity of allowing for $|E_{\beta}^+(T) \backslash C| = 1$ since we can have $E_{\beta }^+ (T) \not \subset C$ for all conics $C$.  Also we will see that that $\beta = \frac{2}{3} (1-\alpha)$ is sharp for this property, and that the result fails if we have less than four point with  ``large" Lelong number.

\vspace{10pt}

\noindent \textbf{Example 3.7.}  Let $L_i$, $i = 1,2,3,4$ be complex lines such that no three intersect at the same point.  Define a current $T =\frac{1}{4} \sum_{i=1}^{4} [L_i]$ and let $\alpha = \frac{1}{2}$.  Note that there are six points with Lelong number $\frac{1}{2}$, so we have satisfied the assumptions of the main theorem, and note that $\beta = \frac{1}{3}$.  As each $L_i$ contains exactly three points of $E_{1/3}^+(T)$, and any pair of the $L_i$ contains exactly five of the points in $E_{1/3}^+(T)$ it follows that for any conic satisfying the result of the corollary, we have one point in $E_{1/3}^+(T)$ not on the conic.

\smallskip

\noindent \textbf{Example 3.8.}  Let $L_i$, $i = 1,2,3,$ be complex lines such that they do not intersect at the same point.  Let $L_1\cap L_2 = \{q_3\}$ , $L_1\cap L_3 = \{q_2\}$ , $L_3\cap L_2 = \{q_1\}$.  Let $q_4\notin L_1\cup L_2 \cup L_3$ and let $L_4, L_5, L_6$ be the lines connecting $q_4$ with $q_1, q_2, q_3$ respectively.  Also $L_4 \cap L_1 =\{p_1\}$, $L_5 \cap L_2 =\{p_2\}$, $L_6 \cap L_3 =\{p_3\}$.  Note that $m_1(\{p_1, p_2, p_3, q_4\}) = 2$. Finally define a current $T =\frac{1}{6} \sum_{i=1}^{6} [L_i]$.  Note that $\nu(T, q_i) = \frac{1}{2}$ and $\nu(T, p_i) = \frac{1}{3}$.  Let $\alpha = \frac{1}{2}$, and note that since $\beta = \frac{1}{3}$, we have that $E_{\beta}^+(T) = \{q_1, q_2, q_3, q_4\}$ which can clearly be contained in a conic, but $E_{\beta}(T) = \{q_1, q_2, q_3, q_4, p_1, p_2, p_3\}$, and $m_2(E_{\beta}(T)) = 5$.

\smallskip

\noindent \textbf{Example 3.9.}  Let $L_i$, $i = 1,2,3,$ be complex lines such that they do not intersect at the same point.  Let $L_1\cap L_2 = \{q_3\}$ , $L_1\cap L_3 = \{q_2\}$ , $L_3\cap L_2 = \{q_1\}$ and define a current $T =\frac{1}{3} \sum_{i=1}^{3} [L_i]$.  Note that $\nu(T, q_i) = \frac{2}{3}$ so if we set $\alpha = \frac{2}{3}$, then we have exactly three points with Lelong number at least $\alpha$, and $\beta = \frac{2}{9}$, thus then $E_{\beta}^+(T)$ contains all three lines, and $|E_{\beta}^+(T) \backslash C| = \infty$ for all conics $C$.

\smallskip

It is even interesting to note that the result fails in the special case where we have only three points with large Lelong number that are collinear.

\smallskip

\noindent \textbf{Example 3.10.}  Let $\{q_i\}_{i=1}^{3} \cup \{p_i\}_{i=1}^{6}$ be points and $\{L_i\}_{i=1}^{3}$ be lines such that $\{q_1, q_2, q_3, p_1\}\in L_1$, $\{q_1, p_2, p_3, p_6\}\in L_2$, and $\{q_3, p_4, p_5, p_6\}\in L_3$.  Also let $\{l_i\}_{i=1}^{4}$ be lines such that $\{q_2, p_2, p_4\}\in l_1$, $\{q_2, p_3, p_5\}\in l_2$, $\{p_1, p_2, p_5\}\in l_3$, and $\{p_1, p_3, p_4\}\in l_4$.  Let $\alpha = \frac{9}{20}$, which means $\beta = \frac{11}{30}$.  We will instead write them as $\alpha = \frac{81}{180}$, $\beta = \frac{66}{180}$.  We now consider the current given by

$$T = \frac{46}{180} [L_1] + \frac{37}{180} \sum_{i=2}^{3} [L_i] +\frac{19}{180} \sum_{i=1}^{2} [l_i] +\frac{11}{180} \sum_{i=3}^{4} [l_i]$$

 \noindent and note $\|T\| = 1$.  Now calculating the Lelong numbers at each points we have:

\begin{eqnarray*}
&&\nu(T, q_1) = \frac{83}{180}\quad \nu(T,q_2) = \frac{84}{180},\quad \nu(T, q_3) = \frac{83}{180} \\
&&\nu(T, p_1) = \frac{68}{180}\quad \nu(T,p_2) = \frac{67}{180},\quad \nu(T, p_3) = \frac{67}{180} \\
&&\nu(T, p_4) = \frac{67}{180}\quad \nu(T,p_5) = \frac{67}{180},\quad \nu(T, p_6) = \frac{74}{180}
\end{eqnarray*}

\smallskip

\noindent and note $\nu(T,q_i) > \alpha$ for $i=1,2,3$ and $\alpha > \nu(T, p_i) > \beta$ for $i=1,\dots ,6$.  So we have exactly three points where $T$ has Lelong number larger than $\alpha$, and these are collinear.  However there are no conics that can contain more than seven of the nine points, i.e. $|E_{\beta}^+(T)\backslash C| \geq 2$ for all conics $C$.

\end{document}